\numberwithin{equation}{section}
\theoremstyle{plain}
\newtheorem{theorem}{Theorem}[section]
\newtheorem{proposition}[theorem]{Proposition}
\newtheorem{lemma}[theorem]{Lemma}
\newtheorem{corollary}[theorem]{Corollary}
\numberwithin{equation}{section}
\theoremstyle{definition}
\newtheorem{example}[theorem]{Example}
\def\Z{\mathbb{Z}} 
\def\R{\mathbb{R}} 
\def\C{\mathbb{C}} 
\def\cF{\cF} 
\def\vx{\mathbf{x}} 
\def\va{\mathbf{a}} 
\def\bk{\mathbf{k}} 
\def\cF{\mathcal{F}}
\def\cC{\mathcal{C}} 
\def\Dih{\mathrm{Dih}} 
\def\PG{\mathcal{P}G} 
\def\cdotcup{\cdot \hspace{-8pt}\bigcup} 
\def\Dih{\mathcal{D}}
\begin{document}
\title{Toric manifolds over cyclohedra}

\author{Seonjeong Park}
\address{Osaka City University Advanced Mathematical Institute, Osaka City University, 3-3-138, Sugimoto, Sumiyoshi-gu, Osaka, Japan}
\email{seonjeong1124@gmail.com}

\subjclass[2010]{14M25, 05E18}

\keywords{toric manifolds, cyclohedron, dihedral group action}

\date{\today}
\maketitle

\begin{abstract}
    We study the action of the dihedral group on the (equivariant) cohomology of the toric manifolds associated with cycle graphs.
\end{abstract}

\section{Introduction}\label{sec:intro}
    A \emph{graph} $G$ is an ordered pair $(V,E)$, where $V$ is a set of vertices and $E$ is a set of unordered pairs of nodes, called edges. A path graph $P_{n+1}$ is a graph whose vertex set is $[n+1]:=\{1,\ldots,n,n+1\}$ and edge set is $\{(i, {i+1})\mid i=1,\ldots,n\}$. A cycle graph is a graph that consists of a single cycle through all vertices, in other words, the cycle graph $C_{n+1}$ is obtained from the path graph $P_{n+1}$ by adding the edge $(1,n+1)$.

    A \emph{graph associahedron} $\mathcal{P}G$ is a simple convex polytope whose facets correspond to the connected proper subgraphs of $G$. The notion of a graph associahedron was introduced by Carr and Devadoss~(\cite{CD2006}) motivated by the associahedron. The associahedron $As^{n}$ is the $n$-dimensional simple convex polytope in which each vertex corresponds to a way of correctly inserting opening and closing parentheses in a word of $n+2$ letters and the edges correspond to single application of the associativity rule, and it can be also constructed as the graph associahedron corresponding to the path graph $P_{n+1}$. Moreover, the permutohedron $Pe^n$, the cyclohedron $Cy^n$, and the stellohedron $St^n$ are the graph associahedra corresponding to the complete graph $K_{n+1}$, the cycle graph $C_{n+1}$, and the star graph $K_{1,n}$, respectively. They have been studied in different contexts in mathematics such as algebraic combinatorics (\cite{Sta63,CFZ02}), discrete geometry (\cite{PS15}) and so on.

    An $n$-dimensional simple convex polytope is called a \emph{Delzant polytope} if the (outward) primitive normal vectors to the facets meeting at each vertex form an integral basis of $\Z^n$. Every graph associahedron can be realized as a Delzant polytope in a canonical way; we will give the canonical construction in Section~\ref{sec:preliminaries}, also see~\cite{CPh2015} for details. Hence, by the fundamental theorem of toric geometry, there is a toric manifold associated with a graph. We denoted by $M_G$ the toric manifold associated with the graph $G$.

    The actions of a finite group on toric manifolds have been also studied by many people, especially for the symmetric group. Garsia and Stanton studied the action of the symmetric group on Stanley-Reisner rings, see~\cite{GS1984}. Note that the Stanley-Reisner ring of a Delzant polytope is isomorphic to the equivariant cohomology ring of the toric manifold over the Delzant polytope. Procesi studied the action of the Weyl group on the (equivariant) cohomology of the toric manifold associated with Weyl chambers, see~\cite{Proc1990}. Note that the Weyl group of Type~A is the symmetric group.

    On the other hand, the automorphism group of the complete graph $K_{n+1}$ is the symmetric group $S_{n+1}$ and the toric manifold $M_{K_{n+1}}$ is the toric manifold associated with Weyl chambers. In general, the automorphism group $\mathrm{Aut}(G)$ of a given graph $G$ is a subgroup of the symmetric group on $V=[n+1]$ and hence one can also study the action of $\mathrm{Aut}(G)$ on the toric manifold $M_G$, and the dihedral group $\Dih_{n+1}$ is the automorphism group of the cycle graph $C_{n+1}$.

    The purpose of this paper is to deduce an explicit formula for the representation of the dihedral group on the equivariant cohomology of the toric manifold $M_{C_{n+1}}$. It should be noted that, for a toric manifold $M$, the equivariant cohomology ring $H_T^\ast(M):=H^\ast(ET\times_T M)$ is isomorphic to $H^\ast(M)\otimes H^\ast(BT)$ as an $H^\ast(BT)$-module. Hence the explicit formula for the representation on $H_T^\ast(M_{C_{n+1}})$ gives the explicit formula for the representation on the ordinary cohomology $H^\ast(M_{C_{n+1}})$.

    This paper is organized as follows: in Section~\ref{sec:preliminaries}, we review the definitions and properties of graph associahedra. Section~\ref{sec:action_nested_sets} deals with the dihedral group action on a cyclohedron. In Section~\ref{sec:action_nested_sets}, we study the subring of $H_T^\ast(M_{C_{n+1}};\C)$ determined by the facial submanifold $M_F$, which is stable under the isotropy group of a face $F$ of $Cy^n$. In Section~\ref{sec:action_equiv_coho}, we deduce an explicit formula for the representation of the dihedral group on the equivariant cohomology of $M_{C_{n+1}}$. Section~\ref{sec:annular_noncrossing} introduces a relationship between the faces of $Cy^n$ and annular non-crossing matchings.

\section{Graph associahedra}\label{sec:preliminaries}
    In this section, we review the construction and properties of the graph associahedron $\mathcal{P}G$, the simple polytope associated with a graph $G$.

    Let $G$ be a connected graph on the vertex set $[n+1]$. For a subset $I\subset [n+1]$, we denote by $G[I]$ the subgraph of $G$ whose vertex set is $I$ and whose edge set consists of all of the edges of $G$ that have both endpoints in $I$.

    Let us review the construction of the graph associahedron $\PG$. Let $\Delta^n$ be a standard simplex whose facets are (outward) normal to the standard basis vectors $-\mathbf{e}_1,\ldots,-\mathbf{e}_n$ and the vector $\sum_{i=1}^n\mathbf{e}_i$. Then we denote by $F_i$ the facet of $\Delta^n$ that is normal to the vector $-\mathbf{e}_i$ for $1\leq i\leq n$ and $F_{n+1}$ the facet normal to the vector $\sum_{i=1}^n\mathbf{e}_i$. Then there is a one-to-one correspondence between the nonempty proper subsets of $[n+1]$ and the nonempty proper faces of $\Delta^n$. Then the graph associahedron $\PG$ is obtained from $\Delta^n$ by truncating the faces corresponding to the connected proper induced subgraphs $G[I]$ in increasing order of dimension. We denote by $F_I$ the facet of $\PG$ corresponding to the connected induced subgraph $G[I]$. The graph associahedron $\PG$ is a simple polytope of dimension $n$ and it can be realized as a Delzant polytope, where the normal vector of the facet $F_I$ is equal to the vector
    \begin{equation*}
        \left\{\begin{array}{cl}-\sum_{i\in I}\mathbf{e}_i&\text{ if }n+1\not\in I,\text{ or}\\\sum_{j\not\in I}\mathbf{e}_j &\text{ if }n+1\in I.\end{array}\right.
    \end{equation*} Hence there is a complex $n$-dimensional toric manifold associated with a connected graph $G$ on $[n+1]$, and we will denote by $M_G$ the toric manifold associated with $G$.

\begin{example}
        Consider the cycle graph $C_4$, see Figure~\ref{fig:Cy3}, the first. Then the connected proper subgraphs of $C_4$ are $$1,2,3,4,12,23,34,14,123,124,134,234.\footnote{If there is no confusion, we omit the curly braces or commas to save the space.}$$ We first truncate the vertices corresponding to the subgraphs $123$, $124$, $134$, and $234$ from $\Delta^3$, see Figure~\ref{fig:Cy3}, the third. Now we truncate the edges corresponding to the subgraphs $12$, $23$, $34$, and $14$, so that we can obtain the cyclonhedron ${Cy}^3$ in Figure~\ref{fig:Cy3}, the last.
        \begin{figure}[t]
        \begin{subfigure}[c]{0.2\textwidth}
        \begin{center}
        \begin{tikzpicture}[scale=1]
            \draw (0,0)--(1,0)--(1,1)--(0,1)--cycle;
            \fill (0,0) circle (2pt);
            \fill (1,0) circle (2pt);
            \fill (0,1) circle (2pt);
            \fill (1,1) circle (2pt);
            \draw (0,-0.3) node{$1$};
            \draw (1,-0.3) node{$2$};
            \draw (1,1.3) node{$3$};
            \draw (0,1.3) node{$4$};
        \end{tikzpicture}
        \end{center}
        \end{subfigure}
        \begin{subfigure}[c]{0.25\textwidth}
        \begin{center}
        \begin{tikzpicture}[scale=0.3]
            \fill[yellow](0,5)--(-4.33,-2.5)--(4.33,-2.5)--cycle;
            \draw (0,0)--(0,5)--(-4.33,-2.5)--(4.33,-2.5)--cycle;
            \draw (0,0)--(-4.33,-2.5);
            \draw (4.33,-2.5)--(0,5);
            \draw (0,-1.25) node{\tiny$1$};
            \draw (-1,1) node{\tiny$2$};
            \draw (1,1) node{\tiny$3$};
            \draw (0,2.5) node{\tiny$23$};
            \draw (-2.16,-1.25) node{\tiny$12$};
            \draw (2.16,-1.25) node{\tiny$13$};
            \draw (2.16,1.25) node{\tiny$34$};
            \draw (-2.16,1.25) node{\tiny$24$};
            \draw (0,-2.5) node{\tiny$14$};
            \draw (0,0) node{\tiny$123$};
            \draw (-4.33,-2.5) node{\tiny$124$};
            \draw (4.33,-2.5) node{\tiny$134$};
            \draw (0,5) node{\tiny$234$};
        \end{tikzpicture}
        \end{center}
        \end{subfigure}
        \begin{subfigure}[c]{0.25\textwidth}
        \begin{center}
        \begin{tikzpicture}[scale=0.4]
            \filldraw[fill=yellow](-.8,3.4)--(-3.36,-0.97)--(-2.52,-2.42)--(2.52,-2.42)--(3.36,-0.97)--(.8,3.4)--cycle;
            \draw (-.8,3.4)--(.8,3.4)--(0,2.02)--cycle;
            \draw (0,1)--(0.86,-.5)--(-0.86,-.5)--cycle;
            \draw (3.36,-0.97)--(2.52,-2.42)--(1.74,-1.01)--cycle;
            \draw (-3.36,-0.97)--(-2.52,-2.42)--(-1.74,-1.01)--cycle;
            \draw (0,2.02)--(0,1);
            \draw (1.74,-1.01)--(0.86,-0.5);
            \draw (-1.74,-1.01)--(-0.86,-0.5);
            \draw (0,-1.25) node{\tiny$1$};
            \draw (-1,1) node{\tiny$2$};
            \draw (1,1) node{\tiny$3$};
            \draw (0,1.5) node{\tiny$23$};
            \draw (-1.3,-.75) node{\tiny$12$};
            \draw (1.3,-.75) node{\tiny$13$};
            \draw (2.16,1.25) node{\tiny$34$};
            \draw (-2.16,1.25) node{\tiny$24$};
            \draw (0,-2.5) node{\tiny$14$};
            \draw (0,0) node{\tiny$123$};
            \draw (-2.6,-1.5) node{\tiny$124$};
            \draw (2.6,-1.5) node{\tiny$134$};
            \draw (0,3.1) node{\tiny$234$};
        \end{tikzpicture}
        \end{center}
        \end{subfigure}
        \begin{subfigure}[c]{0.25\textwidth}
        \begin{center}
        \begin{tikzpicture}[scale=0.4]
            \filldraw[fill=yellow](-.8,3.4)--(-3.36,-0.97)--(-2.688,-2.12999)--(2.70449, -2.11569)--(3.18863,-1.26288)--(.48,3.4)--cycle;
            \draw (-.8,3.4)--(0.48,3.4)--(0.48, 2.848)--(0.16, 2.296)--(-0.16, 2.296)--cycle;
            \draw (-0.172, 0.7)--(0.172, 0.7)--(0.86,-.5)--(-0.520218, -0.498956)--(-0.692218, -0.201044)--cycle;
            \draw (2.712, -0.986)--(3.18863, -1.26288)--(2.70449, -2.11569)--(2.22644, -1.83969)--(1.74,-1.01)--cycle;
            \draw (-3.36,-0.97)--(-2.688, -2.12999)--(-2.2099, -1.85566)--(-1.90839, -1.28656)--(-2.06839, -1.00944)--cycle;
            \draw (1.74,-1.01)--(0.86,-0.5);
            \draw (0.16, 2.296)--(0.172, 0.7)--(-0.172, 0.7)--(-0.16, 2.296)--cycle;
            \draw (-2.06839, -1.00944)--(-0.692218, -0.201044)--(-0.520218, -0.498956)--(-1.90839, -1.28656)--cycle;
            \draw (-2.364, -2.138)--(-2.688, -2.13);
            \draw (2.712, -0.986)--(3.18863, -1.26288)--(0.48,3.4)--(0.48, 2.848)--cycle;
            \draw (-2.2099, -1.85566)--(-2.688, -2.12999)--(2.70449, -2.11569)--(2.22644, -1.83969)--cycle;
            \draw (0.64, 3.124)--(0.48,3.4);
            \draw (0,-1.25) node{\tiny$1$};
            \draw (-1,1) node{\tiny$2$};
            \draw (1,1) node{\tiny$3$};
            \draw (0,1.5) node{\tiny$23$};
            \draw (-1.3,-.75) node{\tiny$12$};
            \draw (1.8,1.25) node{\tiny$34$};
            \draw (0,-1.95) node{\tiny$14$};
            \draw (0,0) node{\tiny$123$};
            \draw (-2.6,-1.5) node{\tiny$124$};
            \draw (2.6,-1.5) node{\tiny$134$};
            \draw (0,3.1) node{\tiny$234$};
        \end{tikzpicture}
        \end{center}
        \end{subfigure}
        \caption{Construction of ${Cy}^3$}\label{fig:Cy3}
        \end{figure}
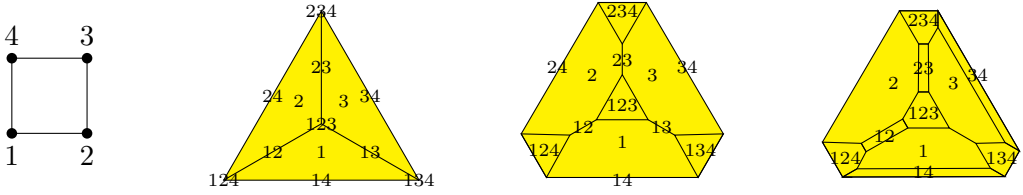
    \end{example}

    Note that two facets $F_I$ and $F_J$ of $\PG$ intersect if and only if $I\subseteq J$, $J\subseteq I$, or the induced subgraph $G[I\cup J]$ is disconnected. Hence a subset $N\subset 2^{[n+1]}\setminus [n+1]$ corresponds to a face of $\PG$ if and only if it satisfies the following three conditions.
    \begin{enumerate}
        \item[(N1)] If $I\in N$, then $G[I]$ is connected.
        \item[(N2)] If $I,J\in N$, then $I\subseteq J$, $J\subseteq I$, or $I\cap J=\emptyset$.
        \item[(N3)] For any collection of $k\geq 2$ disjoint subsets $J_1,\ldots,J_k\in N$, their union $J_1\cup\cdots\cup J_k$ does not induce a connected subgraph.
    \end{enumerate}
    A subset $N\subset 2^{[n+1]}\setminus [n+1]$ is called a \emph{nested set} of $G$ if it satisfies (N1)$\sim$(N3). Let $\mathcal{N}(G)$ be the set of nested sets of $G$, and let $\mathcal{N}_k(G)=\{N\in\mathcal{N}(G)\mid |N|=k\}$ for $0\leq k\leq n$. Then the face poset $\mathcal{F}(\PG)$ is isomorphic to the poset $\mathcal{N}(G)$ ordered by reverse inclusion, and there is one-to-one correspondence between $\mathcal{N}_k(G)$ and the set of codimension-$k$ faces of $\PG$. When $G$ is a special kind of graphs such as complete graphs, cycle graphs, path graphs, and star graphs, the face numbers of $\PG$ is well-studied. Among them, we only introduce the case when $G$ is a cycle graph.
    \begin{proposition}\cite{Simion}\label{prop:simion}
        For $k=1,\ldots,n$, the number of codimension-$k$ faces of the $n$-dimensional cyclohedron ${Cy}^n$ is equal to
        $$f_k({Cy}^n)={n\choose k}{n+i\choose k}.$$
    \end{proposition}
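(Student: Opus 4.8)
The plan is to reduce everything to a count of nested sets and then compute the full face polynomial through the $h$-vector. By the isomorphism $\mathcal F(\mathcal P G)\cong\mathcal N(G)$ recalled above and the bijection between $\mathcal N_k(G)$ and the codimension-$k$ faces, it suffices to show
\[
f_k(Cy^n)=|\mathcal N_k(C_{n+1})|=\binom{n}{k}\binom{n+k}{k}
\]
(so the index $i$ in the statement is read as $k$). First I would unwind the combinatorics of $\mathcal N(C_{n+1})$: by (N1) a tube is a set $I\subsetneq[n+1]$ inducing a connected subgraph, which for the cycle is exactly a proper \emph{arc} of cyclically consecutive vertices; by (N2) a nested set is a laminar family of such arcs; and (N3) adds that no sub-collection of pairwise disjoint arcs may have union equal to an arc or to all of $[n+1]$. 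Equivalently, the distinct maximal (outermost) arcs of a nested set are pairwise non-adjacent on the cycle and never cover it.

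Rather than count these families directly, I would compute the $h$-vector of the simple polytope $Cy^n$. The key claim is
\[
h_i(Cy^n)=\binom{n}{i}^2\qquad(0\le i\le n),
\]
the type-B Narayana numbers, whose sum $\sum_i\binom{n}{i}^2=\binom{2n}{n}$ already matches the known vertex count $f_n(Cy^n)$. To establish this I would realize $Cy^n$ as the Delzant polytope of Section~\ref{sec:preliminaries} and use a line shelling (equivalently a generic linear functional $\varphi$, giving the even Betti numbers of $M_{C_{n+1}}$): then $h_i$ equals the number of vertices at which exactly $i$ of the $n$ incident edges descend. Since vertices are maximal nested sets of $C_{n+1}$, this turns $h_i$ into the distribution of a descent-type statistic over maximal tubings of the cycle, which I would evaluate either by a recursion that peels off the outermost arc (reducing the cycle to a path, i.e.\ to the associahedron and its known dissection numbers) or by an explicit bijection sending an index-$i$ vertex to a pair of $i$-subsets of an $n$-set. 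This computation of the descent distribution is the step I expect to be the main obstacle, since it is where the cyclic symmetry of $C_{n+1}$, together with (N3), must be handled carefully; it is also the substantive content of Simion's theorem, which can alternatively be proved through her model of centrally symmetric dissections of a convex $(2n+2)$-gon, each codimension-$k$ face corresponding to a symmetric non-crossing dissection using $k$ orbits of diagonals.

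Granting the $h$-vector, the conclusion is a short binomial manipulation. For a simple $n$-polytope the number of codimension-$k$ faces is $\sum_{i=n-k}^{n}\binom{i}{\,n-k\,}h_i$, so
\[
f_k(Cy^n)=\sum_{i=n-k}^{n}\binom{i}{n-k}\binom{n}{i}^2.
\]
Using $\binom{n}{i}\binom{i}{n-k}=\binom{n}{k}\binom{k}{i-n+k}$ and then setting $j=i-(n-k)$ gives
\[
f_k(Cy^n)=\binom{n}{k}\sum_{j=0}^{k}\binom{n}{k-j}\binom{k}{j}=\binom{n}{k}\binom{n+k}{k},
\]
the last equality being the Vandermonde identity. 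This yields the asserted formula; the only genuinely hard input is the evaluation $h_i=\binom{n}{i}^2$ in the previous paragraph.
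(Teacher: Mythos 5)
The paper offers no proof of this proposition; it is quoted from Simion (and the exponent $i$ in $\binom{n+i}{k}$ is indeed a typo for $k$, as you read it). So the only question is whether your argument stands on its own, and it does not quite: everything hinges on the claim $h_i(Cy^n)=\binom{n}{i}^2$, which you assert, correctly identify as ``the substantive content of Simion's theorem,'' and then do not prove. Since the $f$-vector and $h$-vector of a simple polytope determine each other by an invertible linear (binomial) transformation, establishing $f_k=\binom{n}{k}\binom{n+k}{k}$ for all $k$ and establishing $h_i=\binom{n}{i}^2$ for all $i$ are equivalent problems; your proposal therefore rewrites the statement in a different basis rather than proving it. Neither of the two routes you sketch for the $h$-vector is carried out, and the first (peeling off an outermost arc to reduce to the associahedron) is not obviously clean: after removing an outermost tube of $C_{n+1}$ one is left with tubings of a path, but condition (N3) couples the remaining maximal tubes around the cycle, so the recursion needs a careful case analysis that you have not supplied. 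The honest conclusion is that your write-up is a correct reduction plus an unproven key lemma, i.e.\ it has a genuine gap at its core.

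What you did prove is correct and worth keeping: granting $h_i=\binom{n}{i}^2$, the identity
\begin{equation*}
f_k(Cy^n)=\sum_{i=n-k}^{n}\binom{i}{n-k}\binom{n}{i}^2
=\binom{n}{k}\sum_{j=0}^{k}\binom{k}{j}\binom{n}{k-j}
=\binom{n}{k}\binom{n+k}{k}
\end{equation*}
is right (it checks out on $Cy^3$, giving $12,30,20$ and Euler characteristic $2$), and the sink-counting formula $f_{k}=\sum_i\binom{i}{n-k}h_i$ for a simple polytope is correctly stated. To close the gap you would either have to prove the $h$-vector formula (e.g.\ by an explicit shelling of $Cy^n$ with a computed descent distribution over maximal nested sets) or, more in the spirit of Simion's original argument, bypass the $h$-vector entirely and count codimension-$k$ faces directly, via the bijection with centrally symmetric dissections of a convex $(2n+2)$-gon using $k$ symmetry classes of diagonals, or by directly enumerating the size-$k$ nested sets of arcs of $C_{n+1}$ subject to (N1)--(N3).
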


    Consider the polynomial ring $\bk[x_I\mid I\in\mathcal{N}_1(G)]$, where $\mathbf{k}$ is a commutative ring with unit. Then the equivariant cohomology ring of the toric manifold $M_G$, $H_T^\ast(M_G;\bk):=H^\ast(ET\times_T M_G;\bk)$, is the quotient of $\bk[x_I\mid I\in\mathcal{N}_1(G)]$ by the Stanley-Reisner ideal of $\PG$, the ideal generated by square-free monomials $x_{I_1}\cdots x_{I_k}$ for $\{I_1,\ldots,I_k\}\not\in\mathcal{N}_k(G)$. That is,
    $$H_T^\ast(M_G;\bk)=\mathbf{k}[x_I\mid I\in\mathcal{N}_1(G)]\left/\left\langle x_{I_1}\cdots x_{I_k}\mid \{I_1,\ldots,I_k\}\not\in\mathcal{N}_k(G),\,2\leq k\leq n\right\rangle\right.$$
    and for each face $F_N\in \mathcal{F}(\PG)$, the monomial $\prod_{I\in N}x_I$ is a nonzero element of $H_T^\ast(M_G)$ of degree~$2i$ for $i=1,\ldots,n$. Note that $H_T^\ast(M_G;\bk)$ is isomorphic to the Stanley-Reisner ring of $\PG$. The ordinary cohomology ring of $M_G$ is also described from the information of the graph:
    \begin{equation*}
        H^\ast(M_G;\bk)=H_T^\ast(M_G;\bk)/\langle -\sum_{\left\{i\colon i\in I,\right.\atop \left.n+1\not\in I\right\}}x_I+\sum_{\left\{i\colon i\not\in I\right.\atop \left.n+1\in I\right\}}x_I\mid i\leq i\leq n\rangle,
    \end{equation*} see~\cite{CPh2015} for details.

\section{Dihedral group action on the cyclohedron $Cy^n$}\label{sec:action_nested_sets}

    In this section, we introduce the terminologies and notations which we will use, review the properties of dihedral groups as the automorphism groups of cyclic graphs, and then we study the action of the dihedral group on the poset $\mathcal{F}(Cy^n)\cong\mathcal{N}(C_{n+1})$. 

    An automorphism of a graph $G=(V,E)$ is a permutation $\sigma$ on $V$ such that $(u,v)\in E$ if and only if $(\sigma(u),\sigma(v))\in E$. The automorphisms of $G$ form a group of $G$, and we will denote it by $\mathrm{Aut}(G)$.

    The automorphism group $\mathrm{Aut}(C_{n+1})$ is generated by a rotation and a reflection. For each positive integer $k$, the rotation $\sigma_k$ is the permutation on $[n+1]$ given by
    \begin{equation*}\label{eq:rotation}
        \sigma_k \colon [n+1]\to [n+1],~i\mapsto i+k\pmod {n+1},
    \end{equation*}
    such that $\sigma_k=(\sigma_1)^k$ and the order of $\sigma_k$ is $(n+1)/\mathrm{gcd}(k,n+1)$, where $\mathrm{gcd}(k,n+1)$ is the greatest common divisor of $k$ and $n+1$. The reflection $\tau$ is the permutation on $[n+1]$ given by
    $$\tau\colon [n+1]\to [n+1],~i\mapsto -i\pmod {n+1},$$
    such $\tau^2=e$ and $\tau\sigma_k\tau=\sigma_k^{-1}$. Then for each positive integer $k$, $\sigma_k$ and $\sigma_k\tau$ form the automorphism group of the cycle graph $C_{n+1}$, which is the dihedral group $$\Dih_{n+1}=\{\sigma_k,\,\sigma_k\tau\mid 1\leq k\leq n+1\}.$$

    Note that the face poset of $Cy^n$ is isomorphic to the poset $\mathcal{N}(C_{n+1})$ ordered by reverse inclusion. Hence to study the action of $\Dih_{n+1}$ on $Cy^n$, it is enough to see the action of $\Dih_{n+1}$ on $\mathcal{N}(C_{n+1})$.

    There is a natural action of $\Dih_{n+1}$ on $\mathcal{N}(C_{n+1})$ coming from the action of $\Dih_{n+1}$ on $C_{n+1}$; for each $\phi\in \Dih_{n+1}$, if $I=\{i_1,\ldots,i_k\}\in \mathcal{N}_1(C_{n+1})$, then $\phi\cdot I=\{\phi(i_1),\ldots,\phi(i_k)\}\in\mathcal{N}_1(C_{n+1})$, and hence if $N=\{I_1,\ldots,I_\ell\}\in\mathcal{N}_k(C_{n+1})$, then $\phi\cdot N=\{\phi\cdot I_1,\ldots,\phi\cdot I_\ell\}\in\mathcal{N}_k(C_{n+1}).$

    For each   $N\in\mathcal{N}(C_{n+1})$, we denote by $(\Dih_{n+1})_N=\{\phi\in\Dih_{n+1}\mid \phi\cdot N=N\}$, the isotropy group of $N$. If $N=\emptyset$, then $(\Dih_{n+1})_N=\Dih_{n+1}$, and otherwise, $(\Dih_{n+1})_N$ is a proper subgroup of $\Dih_{n+1}$. Note that the dihedral group $\Dih_{n+1}$ has two kinds of subgroups
    \begin{enumerate}
        \item $\langle \sigma_k\rangle$ for a divisor $k$ of $n+1$, and
        \item $\langle \sigma_k,\sigma_r\tau\rangle$ for a divisor $k$ of $n+1$ and $0\leq r<k$.
    \end{enumerate} Then the subgroup $\langle \sigma_k\rangle$ is isomorphic to the cyclic group $\cC_d$ of order $d=(n+1)/\mathrm{gcd}(k,n+1)$, and the subgroup $\langle \sigma_k,\sigma_r\tau\rangle$ is isomorphic to the dihedral group $\Dih_d$.

    For a nested set  $N=\{I_1,\ldots,I_k\}$, we set $C_{n+1}[N]:=C_{n+1}[I_1\cup\cdots\cup I_k]$, the induced subgraph of $C_{n+1}$ by the union $I_1\cup\cdots\cup I_k$. Note that for $\phi\in(\Dih_{n+1})_N$, $C_{n+1}[\phi\cdot N]=C_{n+1}[N]$.

    \begin{lemma}\label{lem:iso_nested_set}
        For each   $N\in\mathcal{N}_k(C_{n+1})$, $(\Dih_{n+1})_N$ is isomorphic to $\cC_d$ or $\Dih_d$ for some common divisor $d$ of $|N|$ and $\kappa(C_{n+1}[N])$, where $\kappa(C_{n+1}[N])$ is the number of the components of the graph $C_{n+1}[N]$.
    \end{lemma}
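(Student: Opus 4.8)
The plan is to analyze $H := (\Dih_{n+1})_N$ through its rotation part and to reduce both divisibility assertions to free group actions. That $H$ is isomorphic to $\cC_d$ or $\Dih_d$ for \emph{some} $d$ is immediate from the classification of subgroups of $\Dih_{n+1}$ recalled above: either $H$ is contained in the rotation subgroup $\langle\sigma_1\rangle$ and is cyclic, or it contains a reflection and is dihedral; in either case $d$ equals the order of the rotation part $H\cap\langle\sigma_1\rangle=\langle\sigma_m\rangle$, where $m\mid(n+1)$ and $d=(n+1)/m$. The substance of the lemma is therefore to show that this $d$ divides both $|N|$ and $\kappa(C_{n+1}[N])$.

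The single observation that drives everything is a rigidity property of arcs: if $A\subsetneq[n+1]$ is a nonempty set of cyclically consecutive vertices, then $\sigma_t\cdot A=A$ forces $t\equiv 0\pmod{n+1}$, since a nontrivial rotation necessarily displaces the initial vertex of the arc. By (N1) every block $I\in N$ is a connected proper induced subgraph of the cycle, hence exactly such a proper arc, so this applies to each block. I would then let $\cC_d=\langle\sigma_m\rangle$ act on the $|N|$-element set $N=\{I_1,\dots,I_k\}$ by permuting blocks: by rigidity no $\sigma_m^j$ with $0<j<d$ fixes any block, so the action is free, every orbit has size $d$, and hence $d\mid|N|$.

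For the second divisibility I would first verify that $V_N:=\bigcup_{I\in N}I$ is a \emph{proper} subset of $[n+1]$, so that the components of $C_{n+1}[N]$ are genuine proper arcs. Taking the maximal blocks $J_1,\dots,J_s$ of $N$, which are pairwise disjoint by (N2) and satisfy $V_N=J_1\cup\cdots\cup J_s$, either $s=1$ and $V_N=J_1\subsetneq[n+1]$, or $s\geq 2$ and (N3) forces $C_{n+1}[V_N]$ to be disconnected, so $V_N\neq[n+1]$ in both cases. This is the one place where the nested-set axioms are used in an essential way, and I regard it as the main obstacle. Granting it, every connected component of $C_{n+1}[N]$ is a proper arc; since $\sigma_m\in H$ fixes $C_{n+1}[N]$, it permutes these components, and rigidity again makes this $\cC_d$-action free, giving $d\mid\kappa(C_{n+1}[N])$.

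Finally I would assemble the two pieces. The common value $d=(n+1)/m$ divides both $|N|$ and $\kappa(C_{n+1}[N])$, and the only remaining point is to confirm that when $H$ contains a reflection $\rho$ it is isomorphic to $\Dih_d$ with this \emph{same} $d$: adjoining $\rho$ to $\langle\sigma_m\rangle$ gives $\langle\sigma_m,\rho\rangle\cong\Dih_d$ via $\rho\sigma_m\rho=\sigma_m^{-1}$, and $H$ can be no larger because $H/(H\cap\langle\sigma_1\rangle)$ embeds into $\Dih_{n+1}/\langle\sigma_1\rangle\cong\cC_2$. This completes the argument.
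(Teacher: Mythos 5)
Your proof is correct and follows essentially the same route as the paper's: both arguments come down to letting the rotation part $\langle\sigma_m\rangle\cong\cC_d$ of the stabilizer act on the set of blocks of $N$ and on the set of components of $C_{n+1}[N]$, extracting both divisibilities from the orbit decomposition, and then checking whether a reflection is present. Your write-up is in fact more careful than the paper's, which asserts the decomposition of $N$ into $d$ equal-sized pieces and the divisibility of $\kappa(C_{n+1}[N])$ without spelling out the arc-rigidity (freeness) argument or the properness of $\bigcup_{I\in N}I$.
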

    \begin{proof}
        Note that $(\Dih_{n+1})_N\cong \Dih_{n+1}$ if and only if $N=\emptyset$.
        If $\kappa(C_{n+1}[N])=1$, then a nontrivial element $\phi$ fixing $N$ must be a reflection. Hence $(\Dih_{n+1})_N$ is $\langle e\rangle$ or $\Dih_1$.

        Now assume that $\kappa(C_{n+1}[N])=\ell$ and $(\Dih_{n+1})_N$ is not a subgroup of $\Dih_1$. Then $N$ can be divided into the nested sets $N_1,\ldots,N_{d}$ such that $|N_1|=\cdots=|N_{d}|$ and $(\sigma_x)^{i-1}\cdot N_1=N_{i}$ for $i=1,\ldots,d$. Then $x=\frac{n+1}{d}$ and each of $N_1,\ldots,N_d$ can be identified with each other. Hence $d$ should be a common divisor of $k$ and $\ell$. We take $d$ as big as possible. Then $\cC_d$ is a subgroup of $(\Dih_{n+1})_N$. If there is no reflection $\tau'$ in $\Dih_{n+1}$ such that $\tau'\cdot N=N$, then $(\Dih_{n+1})_N$ is the cyclic group $\langle \sigma_x\rangle\cong\cC_d$.

        If the isotropy group $(\Dih_{n+1})_N$ has also a reflection $\tau' \in \Dih_{n+1}$, then there exists an integer $i\in[d]$ such that $\tau'\cdot N_i=N_i$ or $\tau'\cdot N_i=N_{i+1}$. If $\tau'\cdot N_i=N_{i+1}$, then there exists a reflection $\tau''$ such that $\tau''\cdot N_i=N_i$. In fact, $\tau''=(\sigma_x)^{-1}\tau'$. Hence $(\Dih_{n+1})_N$ is isomorphic to $\langle\sigma_x,\tau'\rangle\cong \Dih_{d}$. Furthermore, $(\Dih_{n+1})_{N_i}\cong\Dih_1$ for each $1\leq i\leq d$.
    \end{proof}

    \begin{example}\label{ex:iso_nested}
        Consider the action of $\Dih_6$ on $\mathcal{N}(C_6)$, and the nested sets $\{12,45\}$, $\{1,4\}$, and $\{1,12,4,45\}$, see Figure~\ref{fig:dih_nested}. Then $\{12,45\}$ decomposes into two nested sets $\{12\}$ and $\{45\}$ such that $\sigma_3\cdot\{12\}=\{45\}$ and $\tau\cdot\{12\}=\{45\}$. The nested set $\{1,4\}$ also decomposes into two nested sets $\{1\}$ and $\{4\}$ such that $\sigma_3\cdot\{1\}=\{4\}$ and $(\sigma_5\tau)\cdot\{1\}=\{4\}$. Hence the nested sets $\{12,45\}$ and $\{1,4\}$ have the isotropy groups $\langle \sigma_3,\tau\rangle$ and $\langle \sigma_3,\sigma_2\tau\rangle$, respectively. Both $\langle \sigma_3,\tau\rangle$ and $\langle \sigma_3,\sigma_2\tau\rangle$ are isomorphic to $\Dih_2.$ On the other hand, there is no reflection in $\Dih_6$ preserving $\{1,12,4,45\}$, but $\{1,12,4,45\}$ decomposes into two nested sets $\{1,12\}$ and $\{4,45\}$ satisfying $\sigma_3\cdot\{1,12\}=\{4,45\}$. Hence the nested set $\{1,12,4,45\}$ has the isotropy group $\langle\sigma_3\rangle\cong \cC_2$.

        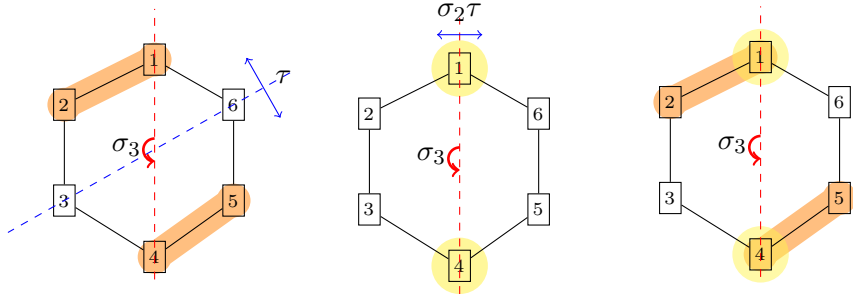
\begin{figure}[h]
        \begin{center}
            \begin{subfigure}{.25\textwidth}
            \centering
            \begin{tikzpicture}[scale=.75]
            \fill[orange!50] (4.6,-1) circle(0.3cm);
            \fill[orange!50] (4.6, 2.5) circle(0.3cm);
            \fill[orange!50] (3,1.7) circle(0.3cm);
            \fill[orange!50] (6,0) circle(0.3cm);
    	   \draw[line width=11pt, color=orange!50] (4.6,-1)--(6,0);
    	   \draw[line width=11pt, color=orange!50] (4.6, 2.5)--(3,1.7);
            \node [draw] (4) at (4.6,-1) {\!\tiny$4$\!};
        	\node [draw] (3) at (3,0) {\!\tiny$3$\!};
        	\node [draw] (5) at (6,0) {\!\tiny$5$\!};
        	\node [draw] (2) at (3,1.7) {\!\tiny$2$\!};
        	\node [draw] (6) at (6,1.7) {\!\tiny$6$\!};
            \node [draw] (1) at (4.6, 2.5) {\!\tiny$1$\!};
            \draw (1)--(2)--(3)--(4)--(5)--(6)--(1);
            \draw[blue,dashed] (7,68/30)--(2,-68/30+1.7);
            \draw[blue,shift={(6+9/17,2)},rotate=90, <->] (-9/17,-0.3)--(9/17,0.3);
            \draw (6+9/17+18/51,2.2) node{$\tau$};
            \draw[red,dashed] (4.6,-1.4)--(4.6,3.4);
            \draw [->,line width=1pt,red] (4.6,1.1) arc[x radius=0.2cm, y radius =.2cm, start angle=90, end angle=270];
            \draw (4.1,0.95) node{$\sigma_3$};
            \end{tikzpicture}
            \end{subfigure}
            \begin{subfigure}{.25\textwidth}
            \centering
            \begin{tikzpicture}[scale=.75]
            \fill[yellow!50] (4.6,-1) circle(0.5cm);
            \fill[yellow!50] (4.6, 2.5) circle(0.5cm);
            \node [draw] (4) at (4.6,-1) {\!\tiny$4$\!};
        	\node [draw] (3) at (3,0) {\!\tiny$3$\!};
        	\node [draw] (5) at (6,0) {\!\tiny$5$\!};
        	\node [draw] (2) at (3,1.7) {\!\tiny$2$\!};
        	\node [draw] (6) at (6,1.7) {\!\tiny$6$\!};
            \node [draw] (1) at (4.6, 2.5) {\!\tiny$1$\!};
            \draw (1)--(2)--(3)--(4)--(5)--(6)--(1);
            \draw[red,dashed] (4.6,-1.5)--(4.6,3.4);
            \draw [->,line width=1pt,red] (4.6,1.1) arc[x radius=0.2cm, y radius =.2cm, start angle=90, end angle=270];
            \draw (4.1,0.95) node{$\sigma_3$};
            \draw[blue,shift={(5.45,3.1)},rotate=90, <->] (0,0.45)--(0,1.25);
            \draw (4.6,3.5) node{$\sigma_2\tau$};
            \end{tikzpicture}
            \end{subfigure}
            \begin{subfigure}{.25\textwidth}
            \centering
            \begin{tikzpicture}[scale=.75]
    		\pgfsetfillopacity{0.5}
            \fill[orange] (4.6,-1) circle(0.3cm);
            \fill[orange] (4.6, 2.5) circle(0.3cm);
            \fill[orange] (3,1.7) circle(0.3cm);
            \fill[orange] (6,0) circle(0.3cm);
    	   \draw[line width=11pt, color=orange!50] (4.6,-1)--(6,0);
    	   \draw[line width=11pt, color=orange!50] (4.6, 2.5)--(3,1.7);
            \fill[yellow!80] (4.6,-1) circle(0.5cm);
            \fill[yellow!80] (4.6, 2.5) circle(0.5cm);
    		\pgfsetfillopacity{1}
            \node [draw] (4) at (4.6,-1) {\!\tiny$4$\!};
        	\node [draw] (3) at (3,0) {\!\tiny$3$\!};
        	\node [draw] (5) at (6,0) {\!\tiny$5$\!};
        	\node [draw] (2) at (3,1.7) {\!\tiny$2$\!};
        	\node [draw] (6) at (6,1.7) {\!\tiny$6$\!};
            \node [draw] (1) at (4.6, 2.5) {\!\tiny$1$\!};
            \draw (1)--(2)--(3)--(4)--(5)--(6)--(1);
            \draw[red,dashed] (4.6,-1.4)--(4.6,3.4);
            \draw [->,line width=1pt,red] (4.6,1.1) arc[x radius=0.2cm, y radius =.2cm, start angle=90, end angle=270];
            \draw (4.1,0.95) node{$\sigma_3$};
            \end{tikzpicture}
            \end{subfigure}
        \end{center}
        \caption{Isotropy groups of nested sets}\label{fig:dih_nested}
        \end{figure}
    \end{example}

    Given a cycle graph $C_{n+1}$, we define
    \begin{equation*}
        \begin{split}
            \alpha_{n+1}(d,k)&=|\{N\in\mathcal{N}_k(C_{n+1})\mid (\Dih_{n+1})_N\cong \cC_d\}|,\text{ and}\\
            \beta_{n+1}(d,k)&=|\{N\in\mathcal{N}_k(C_{n+1})\mid (\Dih_{n+1})_N\cong \Dih_d\}|.
        \end{split}
    \end{equation*}
    We can easily compute $\alpha_{n+1}(d,k)$ and $\beta_{n+1}(d,k)$ in some special cases.
    \begin{lemma} We have the following.
    \begin{enumerate}
        \item $\alpha_{n+1}(d,k)=\beta_{n+1}(d,k)=0$ if $d\nmid k$, $d\nmid n+1$, or $d+k>n+1$;
        \item $\alpha_{n+1}(1,1)=0$ and $\beta_{n+1}(1,1)=n(n+1)$;
        \item $\alpha_{n+1}(\frac{n+1}{2},\frac{n+1}{2})=0$ and $\beta_{n+1}(\frac{n+1}{2},\frac{n+1}{2})=2$ when $n+1$ is even.
    \end{enumerate}
    \end{lemma}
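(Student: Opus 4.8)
The plan is to treat the three items separately, reading off the divisibility constraints in (1) directly from Lemma~\ref{lem:iso_nested_set} and isolating the single genuinely geometric inequality $d+k\le n+1$. Suppose there is an $N\in\mathcal{N}_k(C_{n+1})$ with $(\Dih_{n+1})_N\cong\cC_d$ or $\Dih_d$. By Lemma~\ref{lem:iso_nested_set} the number $d$ is a common divisor of $k$ and $\ell:=\kappa(C_{n+1}[N])$; in particular $d\mid k$, so $\alpha_{n+1}(d,k)=\beta_{n+1}(d,k)=0$ whenever $d\nmid k$. Moreover the cyclic part of the isotropy group is $\langle\sigma_x\rangle$ with $x=(n+1)/d$, an element of the rotation subgroup, so its order $d$ divides $n+1$; hence $d\nmid n+1$ also forces the counts to vanish. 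It remains to rule out $d+k>n+1$. Since $d\mid\ell$ gives $d\le\ell$, it suffices to prove the sharper, symmetry-free bound
\begin{equation*}
|N|+\kappa(C_{n+1}[N])\le n+1\qquad\text{for every nonempty }N\in\mathcal{N}(C_{n+1}),
\end{equation*}
after which $d+k\le\ell+k\le n+1$.

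To prove this bound, write $U=\bigcup_{I\in N}I$ for the support and let $C_1,\dots,C_\ell$ be the connected components of $C_{n+1}[U]$, each a proper arc on $s_j$ vertices. First I would observe that each $C_j$ is itself an element of $N$: the inclusion-maximal elements of $N$ inside $C_j$ are pairwise disjoint by (N2), their union is the connected set $C_j$, and (N3) then forbids two or more of them, leaving a unique maximal element equal to $C_j$. Since the $\ell$ arcs $C_1,\dots,C_\ell$ are separated on the cycle by $\ell$ gaps of at least one vertex each, $|U|=\sum_j s_j\le n+1-\ell$. Finally I would produce an injection $N\hookrightarrow U$: to each $I\in N$ assign a vertex of $I$ lying in no inclusion-child of $I$ in the forest $(N,\subseteq)$—such a vertex exists because the children are disjoint proper subarcs whose union cannot exhaust the connected arc $I$ without violating (N3)—and this is injective, since for $I'\subsetneq I$ the vertex chosen for $I'$ lies in a child of $I$ while the vertex chosen for $I$ avoids all children of $I$. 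Hence $|N|\le|U|\le n+1-\ell$. (Equivalently one argues componentwise, using that a nested set of the path $\mathcal{P}(P_{s_j})=As^{s_j-1}$ has at most $s_j-1$ proper elements and then adjoins $C_j$.) This structural step, establishing $|N|\le|U|$, is the main obstacle.

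For item (2) we have $k=1$, so $N=\{I\}$ is a single proper arc. No nontrivial rotation $\sigma_j$ fixes a proper arc, since it shifts the starting vertex and $j\not\equiv0\pmod{n+1}$; on the other hand the reflection through the midpoint of $I$ preserves $I$, and two distinct reflections fixing $I$ would compose to a nontrivial rotation fixing $I$, which is impossible. Thus every single arc has isotropy group $\{e,\tau_I\}\cong\Dih_1$ and none has trivial isotropy, giving $\alpha_{n+1}(1,1)=0$ and $\beta_{n+1}(1,1)=|\mathcal{N}_1(C_{n+1})|=n(n+1)$, the last count obtained by choosing a starting vertex ($n+1$ ways) and a length $1\le\ell\le n$ ($n$ ways).

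For item (3), write $n+1=2m$ and $d=k=m$, so $x=(n+1)/d=2$ and the cyclic part of the isotropy is $\langle\sigma_2\rangle\cong\cC_m$. Because no nontrivial power of $\sigma_2$ fixes a proper arc, $\langle\sigma_2\rangle$ acts freely on the $m$-element set $N$, whence $N$ is a single orbit $\{I,\sigma_2 I,\dots,\sigma_2^{m-1}I\}$ of one arc $I$. By (N2) these $m$ distinct equal-length arcs are pairwise disjoint, so $m\,|I|\le 2m$ forces $|I|\le 2$; if $|I|=2$ the orbit tiles the whole cycle and its union is connected, contradicting (N3) (for $m\ge2$; the case $m=1$ coincides with item (2)), so $|I|=1$ and $N$ is the set of all singletons of one parity class of $[2m]$. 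There are exactly two such nested sets, and each is also preserved by the reflection $\tau\colon i\mapsto -i$, which preserves parity modulo $2m$; hence its isotropy group is $\langle\sigma_2,\tau\rangle\cong\Dih_m$ rather than $\cC_m$. Therefore $\alpha_{n+1}(\tfrac{n+1}{2},\tfrac{n+1}{2})=0$ and $\beta_{n+1}(\tfrac{n+1}{2},\tfrac{n+1}{2})=2$.
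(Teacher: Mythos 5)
Your proof is correct, but for item (1) it takes a genuinely different and substantially heavier route than the paper. The paper disposes of the clause $d+k>n+1$ with a two-line arithmetic observation: once $d\mid k$ and $d\mid n+1$ are known (which you both read off from Lemma~\ref{lem:iso_nested_set}), the difference $n+1-k$ is a positive multiple of $d$ because $k<n+1$, hence $n+1-k\ge d$, i.e.\ $d+k\le n+1$; no geometry of nested sets is needed at all. You instead prove the stronger, purely combinatorial inequality $|N|+\kappa(C_{n+1}[N])\le n+1$ via the observation that each component of the support lies in $N$, the gap count $|U|\le n+1-\ell$, and an injection $N\hookrightarrow U$ choosing for each $I$ a vertex outside all its children in the containment forest; this argument is sound (the existence and injectivity claims both follow correctly from (N2) and (N3)), and it buys a sharper structural fact about nested sets of $C_{n+1}$, but at the cost of considerable length for a statement the paper gets for free from divisibility. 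For items (2) and (3) your approach matches the paper's, merely filling in details it leaves implicit: that a single proper arc is fixed by exactly one reflection and no nontrivial rotation (so the isotropy is exactly $\Dih_1$), and that a size-$\tfrac{n+1}{2}$ nested set with isotropy containing $\langle\sigma_2\rangle$ must be a free $\langle\sigma_2\rangle$-orbit of singletons, forcing the two parity classes, each of which also admits the reflection $\tau$ and hence contributes to $\beta$ rather than $\alpha$.
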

    \begin{proof}
        If $(\Dih_{n+1})_N$ is isomorphic to $\cC_d$ or $\Dih_d$, then it is clear that $d\mid k$ and $d\mid n+1$. Furthermore, since $k<n+1$, if $d\mid k$ and $d\mid n+1$, then we have $d+k\leq n+1$. This proves~(1).
        If $N=\{I\}$ is a singleton, then $(\Dih_{n+1})_N=\Dih_1$, hence this proves~(2).
        When $n+1$ is even, the proper maximal divisor of $n+1$ is $\frac{n+1}{2}$ and there are only two nested sets satisfying $\sigma_{2}\cdot N=N$; $\{1,3,\ldots,n\}$ and $\{2,4,\ldots,n+1\}$. This proves~(3).
    \end{proof}

    Note that $\sum_{d\mid k}\left(\alpha_{n+1}(d,k)+\beta_{n+1}(d,k)\right)$ is equal to the number of nested sets of cardinality~$k$, where the summation is taken over all divisors of $k$, hence we have
    \begin{equation}\label{eq:gamma_simion}
        \sum_{d\colon d\mid k}\left(\alpha_{n+1}(d,k)+\beta_{n+1}(d,k)\right)={ n \choose k}{n+k \choose k}
    \end{equation}
    from Proposition~\ref{prop:simion}.

    Note that each cycle graph $C_\ell$ can be presented as $\ell$ dots equally spaced on a circle; there is a one-to-one correspondence between the vertices $i\in [\ell]=V(C_\ell)$ and the dots
    $v_{\ell,i}:=\left(\cos{\frac{2\pi (i-1)}{\ell}},\sin{\frac{2\pi (i-1)}{\ell}}\right)\in S^1.$ If $\ell$ is a divisor of $n+1$, say $p\ell=n+1$, then there is a $p$-to-$1$ covering $\varphi\colon C_{n+1}\to C_\ell$ via the correspondence:
    \begin{equation*}
        \{v_{n+1,i},v_{n+1,\ell+i},\ldots,v_{n+1,(p-1)\ell+i}\} \stackrel{p:1}\longleftrightarrow \{v_{\ell,i}\}.
    \end{equation*}
    Hence if $N\in\mathcal{N}_k(C_\ell)$, then $\varphi^{-1}(N) \in \mathcal{N}_{pk}(C_{n+1})$ and $(\Dih_{n+1})_{\varphi^{-1}(N)}$ is determined by $(\Dih_\ell)_{N}$.
    If $(\Dih_\ell)_{N}\cong\cC_d$, then $(\Dih_{n+1})_{\varphi^{-1}(N)}\cong\cC_{pd}$; if $(\Dih_\ell)_{N}\cong\Dih_d$, then $(\Dih_{n+1})_{\varphi^{-1}(N)}\cong\Dih_{pd}$. One can easily see that the converse also holds.
    \begin{proposition}
        Let $\ell$ be a divisor of $n+1$, say $p\ell=n+1$. A nested set $N\in\mathcal{N}(C_{n+1})$ has the isotropy group $\Dih_{pd}$ (respectively, $\cC_{pd}$) if and only if there exists a nested set $N_0\in\mathcal{N}(C_\ell)$ such that $N=\varphi^{-1}(N_0)$ and $(\Dih_{\ell})_{N_0}\cong\Dih_d$ (respectively, $(\Dih_{\ell})_{N_0}\cong\cC_d$), where $\varphi$ is the $p$-to-$1$ covering $C_{n+1}\to C_\ell$.
    \end{proposition}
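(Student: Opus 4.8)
The plan is to lift everything to the level of symmetry groups and reduce the statement to a computation of preimages under a single homomorphism. First I would define the surjective homomorphism $\pi\colon\Dih_{n+1}\to\Dih_\ell$ by $\sigma_1\mapsto\sigma_1$ and $\tau\mapsto\tau$; this is well defined because $\sigma_1^{\,n+1}=(\sigma_1^{\,\ell})^{p}$ maps to the identity and the dihedral relations are preserved, and its kernel is exactly the deck group $K:=\langle\sigma_\ell\rangle\cong\cC_p$ of $\varphi$. The two facts I would then record are: (a) $\pi$ sends rotations to rotations and reflections to reflections, since $\pi(\sigma_k)$ is a rotation and $\pi(\sigma_k\tau)$ a reflection of $C_\ell$; and (b) $\varphi$ is $\pi$-equivariant, $\varphi(\phi\cdot v)=\pi(\phi)\cdot\varphi(v)$, which I would verify on the generators $\sigma_1,\tau$ and extend multiplicatively. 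I would also use the bijection already observed before the statement: $\varphi^{-1}$ identifies $\mathcal{N}(C_\ell)$ with the $K$-invariant nested sets of $C_{n+1}$, its inverse being $N\mapsto\varphi(N)$.

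The core of the proof is the identity $(\Dih_{n+1})_N=\pi^{-1}\big((\Dih_\ell)_{N_0}\big)$ for $N=\varphi^{-1}(N_0)$. To establish it, I would apply equivariance to get $\varphi(\phi\cdot N)=\pi(\phi)\cdot N_0$; since $K$ is normal and $N$ is $K$-invariant, $\phi\cdot N$ is again $K$-invariant, hence $\phi\cdot N=\varphi^{-1}(\pi(\phi)\cdot N_0)$, and injectivity of $\varphi^{-1}$ gives $\phi\cdot N=N\iff\pi(\phi)\cdot N_0=N_0$. Combining this identity with fact (a) proves the \emph{if} direction: because $K$ consists of rotations, $\pi^{-1}(\cC_d)$ contains no reflections and has order $pd$, so it is $\cC_{pd}$, while $\pi^{-1}(\Dih_d)$ contains reflections and has order $2pd$, so it is $\Dih_{pd}$.

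For the converse I would show that the hypothesis $(\Dih_{n+1})_N\cong\cC_{pd}$ (resp. $\Dih_{pd}$) already forces $N$ to be $K$-invariant. In both cases the rotation subgroup of $(\Dih_{n+1})_N$ has order $pd$, and by Lemma~\ref{lem:iso_nested_set} we have $pd\mid n+1=p\ell$, hence $d\mid\ell$; as $\langle\sigma_1\rangle\cong\cC_{n+1}$ has a unique subgroup of each order, this rotation subgroup equals $\langle\sigma_{\ell/d}\rangle$, which contains $\sigma_\ell=(\sigma_{\ell/d})^{d}$. Thus $K\leq(\Dih_{n+1})_N$, so $N=\varphi^{-1}(N_0)$ with $N_0:=\varphi(N)\in\mathcal{N}(C_\ell)$, and the identity of the previous paragraph forces $(\Dih_\ell)_{N_0}$ to be cyclic or dihedral matching $(\Dih_{n+1})_N$, with parameter exactly $d$.

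The step I expect to require the most care is the verification that $N_0=\varphi(N)$ is genuinely a nested set of $C_\ell$ once $N$ is known to be $K$-invariant. Concretely, every arc of a $K$-invariant nested set must have fewer than $\ell$ vertices, for otherwise its full $K$-orbit consists of $p\geq2$ disjoint arcs whose union is connected (indeed all of $C_{n+1}$ when the length is exactly $\ell$), violating (N3); granting this, $\varphi$ restricts injectively to each arc and $\varphi(N)$ inherits (N1)--(N3). This is essentially the content of the bijection cited in the first paragraph, so after it the remaining arguments are the routine group-theoretic identifications above.
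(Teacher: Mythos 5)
Your argument is correct, and it supplies considerably more than the paper does: the paper states this proposition with no proof environment at all, treating it as immediate from the covering correspondence set up in the preceding paragraph and dismissing the converse with ``one can easily see that the converse also holds.'' Your formalization via the quotient homomorphism $\pi\colon\Dih_{n+1}\to\Dih_\ell$ with kernel $K=\langle\sigma_\ell\rangle\cong\cC_p$, the $\pi$-equivariance of $\varphi$, and the resulting identity $(\Dih_{n+1})_N=\pi^{-1}\bigl((\Dih_\ell)_{N_0}\bigr)$ is exactly the right way to make the paper's assertion precise. The two places where you add genuinely necessary content are (i) the converse, where the hypothesis that the isotropy group has rotation part of order $pd$ forces $K\leq(\Dih_{n+1})_N$ because $\langle\sigma_1\rangle\cong\cC_{n+1}$ has a unique subgroup of each order, and (ii) the verification that a $K$-invariant nested set descends, which rests on every arc having fewer than $\ell$ vertices. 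One small correction to (ii): when an arc $I$ has length strictly greater than $\ell$, the translates $I$ and $\sigma_\ell\cdot I$ overlap without containment, so the violation is of (N2) rather than (N3); only the case of length exactly $\ell$ produces $p$ disjoint arcs whose union is all of $C_{n+1}$ and hence violates (N3). With that one-line adjustment, and keeping in mind that $\varphi^{-1}(N_0)$ must be read as the set of connected components of the preimages of the members of $N_0$ (so that $|\varphi^{-1}(N_0)|=p|N_0|$, consistent with the paper's claim that $\varphi^{-1}$ sends $\mathcal{N}_k(C_\ell)$ to $\mathcal{N}_{pk}(C_{n+1})$), your proof is complete.
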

    As we saw in Example~\ref{ex:iso_nested}, the nested sets $\{12,45\}$ and $\{1,12,4,45\}$ in $\mathcal{N}(C_6)$ have the isotropy groups $\Dih_2$ and $\cC_2$, respectively. In fact, $\{12,45\}$ and $\{1,12,4,45\}$ are induced from the nested sets $\{12\}$ and $\{1,12\}$ in $\mathcal{N}(C_3)$ whose isotropy groups are $\Dih_1$ and $\langle e\rangle$, respectively.

    The proposition above tells us the following.
    \begin{corollary}
        Given a positive integer $n+1$, if $d$ is a common divisor of $n+1$ and $k$, then $$\alpha_{n+1}(d,k)=\alpha_{\frac{n+1}{d}}(1,\frac{k}{d})\text{ and } \beta_{n+1}(d,k)=\beta_{\frac{n+1}{d}}(1,\frac{k}{d}).$$ Otherwise, $\alpha_{n+1}(d,k)=\beta_{n+1}(d,k)=0$.
    \end{corollary}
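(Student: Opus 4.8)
The plan is to obtain this corollary as a direct specialization of the preceding Proposition, with the ``otherwise'' clause supplied by part~(1) of the divisibility Lemma above. First I would dispose of the degenerate case: if $d$ is not a common divisor of $n+1$ and $k$, then $d\nmid n+1$ or $d\nmid k$, so part~(1) of that Lemma gives $\alpha_{n+1}(d,k)=\beta_{n+1}(d,k)=0$, which is exactly the claimed value (the right-hand formula only enters in the common-divisor regime).

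For the substantive case I would assume $d$ is a common divisor of $n+1$ and $k$ and apply the Proposition with $\ell=\frac{n+1}{d}$, so that $p=\frac{n+1}{\ell}=d$. Setting the Proposition's free parameter to $d'=1$ (so that its $pd'$ reads as $d\cdot 1=d$), the biconditional says precisely that a nested set $N\in\mathcal{N}(C_{n+1})$ has isotropy group $\cC_d$ if and only if $N=\varphi^{-1}(N_0)$ for some $N_0\in\mathcal{N}(C_\ell)$ with $(\Dih_\ell)_{N_0}\cong\cC_1$, where $\varphi\colon C_{n+1}\to C_\ell$ is the $d$-to-$1$ covering. The assignment $N_0\mapsto\varphi^{-1}(N_0)$ is injective (apply the surjection $\varphi$ to recover $N_0$), and the Proposition shows it maps onto the family of nested sets with isotropy $\cC_d$; hence it is a bijection between these two families.

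It then remains to track cardinalities through this bijection. Since $\varphi$ is $d$-to-$1$, a nested set $N_0\in\mathcal{N}_{k_0}(C_\ell)$ is carried to $\varphi^{-1}(N_0)\in\mathcal{N}_{d k_0}(C_{n+1})$, as recorded just before the Proposition; requiring the image to lie in $\mathcal{N}_k(C_{n+1})$ forces $k_0=k/d$, an integer precisely because $d\mid k$. Thus the bijection restricts to one between $\{N_0\in\mathcal{N}_{k/d}(C_\ell)\mid (\Dih_\ell)_{N_0}\cong\cC_1\}$ and $\{N\in\mathcal{N}_k(C_{n+1})\mid(\Dih_{n+1})_N\cong\cC_d\}$, and counting both sides yields $\alpha_{n+1}(d,k)=\alpha_{\frac{n+1}{d}}(1,\frac{k}{d})$. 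Running the identical argument with the dihedral alternative of the Proposition (isotropy $\Dih_d=\Dih_{d\cdot 1}$ upstairs matched with $(\Dih_\ell)_{N_0}\cong\Dih_1$ downstairs) gives $\beta_{n+1}(d,k)=\beta_{\frac{n+1}{d}}(1,\frac{k}{d})$. I do not expect a genuine obstacle, since the Proposition already carries all of the geometric content; the only points needing care are confirming that $\varphi^{-1}$ is a bijection onto the prescribed isotropy class rather than merely a well-defined injection, and that the cardinality bookkeeping correctly converts $\mathcal{N}_{k/d}$ downstairs into $\mathcal{N}_k$ upstairs.
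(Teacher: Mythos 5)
Your proposal is correct and follows exactly the route the paper intends: the paper derives this corollary directly from the preceding Proposition (specialized to $\ell=\frac{n+1}{d}$, $p=d$, with the Proposition's parameter set to $1$), with the vanishing case handled by part~(1) of the divisibility Lemma. Your additional care about injectivity of $N_0\mapsto\varphi^{-1}(N_0)$ and the cardinality shift from $\mathcal{N}_{k/d}(C_{(n+1)/d})$ to $\mathcal{N}_k(C_{n+1})$ just makes explicit what the paper leaves implicit.
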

    By using the M\"{o}bius inversion formula, we can compute $\gamma_{n+1}(d,k):=\alpha_{n+1}(d,k)+\beta_{n+1}(d,k)$ from Proposition~\ref{prop:simion}. We review the M\"{o}bius function and inversion formula briefly. A M\"{o}bius function for a poset $\mathcal{P}$ is a map $\mu\colon \mathcal{P}\times\mathcal{P}\to \Z$ inductively defined by the relation $$\boldsymbol{\mu}(x,y)=\begin{cases}1&\text{ for }x=y\\-\sum_{z\colon x\leq z<y}\mu(x,z)&\text{ for }x<y\\0&\text{ otherwise.}\end{cases} $$ For a finite poset $\mathcal{P}$ with M\"{o}bius function $\mu$, if $f$ and $g$ are the real valued function on $\mathcal{P}$, then the M\"{o}bius inversion formula says that the following are equivalent:$$f(x)=\sum_{y\leq x}g(y),\text{ for all }x\in \mathcal{P}$$ $$g(x)=\sum_{y\leq x}\mu(y,x)f(y)\text{ for all }x\in\mathcal{P}.$$
    \begin{lemma}\label{lem:same_size_nested_set}
        The number of nested sets $N\in\mathcal{N}_k(C_{n+1})$ such that $(\cC_{n+1})_N$ is isomorphic to $\cC_d$ or $\Dih_{d}$ is
        $$\gamma_{n+1}(d,k)=\sum_{i\mid\mathrm{gcd}(\frac{n+1}{d},\frac{k}{d}) }\mu(i){\frac{n+1}{id}-1\choose \frac{k}{id}}{\frac{n+k+1}{id}-1\choose \frac{k}{id}},$$
        where $\mu$ is the classical M\"{o}bius function\footnote{The classical M\"obius function is defined on the set of positive integers by $\mu(n)=(-1)^k$ if $n$ is the product of $k$ distinct primes and $\mu(n)=0$ if $n$ is divisible by a square.} in number theory.
    \end{lemma}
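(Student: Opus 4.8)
The plan is to \textbf{M\"obius-invert the face-number identity \eqref{eq:gamma_simion} over the lattice of divisors.} First I would record the combinatorial meaning of $\gamma_{n+1}(d,k)$: by Lemma~\ref{lem:iso_nested_set} the isotropy group of any nested set is $\cC_d$ or $\Dih_d$, and in both cases its subgroup of rotations has order exactly $d$. Thus $\gamma_{n+1}(d,k)$ counts the size-$k$ nested sets whose rotational isotropy has order~$d$. The Corollary already expresses each such count through the covering map $\varphi$ as $\gamma_{n+1}(d,k)=\gamma_{\frac{n+1}{d}}(1,\frac{k}{d})$, so it is convenient to abbreviate $h(m,j):=\gamma_m(1,j)$, the number of size-$j$ nested sets of $C_m$ carrying no nontrivial rotational symmetry.

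Next I would rewrite the identity underlying \eqref{eq:gamma_simion}, which by Proposition~\ref{prop:simion} holds verbatim for every cycle $C_m$, in terms of $h$. Setting $S(m,j):=\binom{m-1}{j}\binom{m+j-1}{j}$ and using the Corollary to replace each summand $\gamma_m(d,j)$ by $h(m/d,j/d)$, the identity becomes the divisor-sum relation
\[ S(m,j)=\sum_{d\mid\gcd(m,j)}h\!\left(\tfrac{m}{d},\tfrac{j}{d}\right), \]
valid for all $m,j$. The key manoeuvre is to notice that this relation is one-dimensional once the ratio $m:j$ is fixed. Writing $g=\gcd(m,j)$ and $m=gm_0$, $j=gj_0$ with $\gcd(m_0,j_0)=1$, every pair $(m/d,j/d)$ appearing on the right (for $d\mid g$) again has coprimacy type $(m_0,j_0)$ and $\gcd$ equal to $g/d$. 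Hence, fixing $(m_0,j_0)$ and putting $\Sigma(t):=S(tm_0,tj_0)$ and $H(t):=h(tm_0,tj_0)$, the relation collapses to the classical form $\Sigma(t)=\sum_{s\mid t}H(s)$, to which the number-theoretic M\"obius inversion applies: $H(t)=\sum_{s\mid t}\mu(t/s)\Sigma(s)$.

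Reindexing by $i=t/s$ and translating back then gives
\[ h(m,j)=\sum_{i\mid\gcd(m,j)}\mu(i)\,S\!\left(\tfrac{m}{i},\tfrac{j}{i}\right)=\sum_{i\mid\gcd(m,j)}\mu(i)\binom{\tfrac{m}{i}-1}{\tfrac{j}{i}}\binom{\tfrac{m+j}{i}-1}{\tfrac{j}{i}}. \]
Finally I would substitute $\gamma_{n+1}(d,k)=h\!\left(\tfrac{n+1}{d},\tfrac{k}{d}\right)$, i.e. take $m=\tfrac{n+1}{d}$, $j=\tfrac{k}{d}$, and use $\gcd(m,j)=\gcd(\tfrac{n+1}{d},\tfrac{k}{d})$; since $\tfrac{m+j}{i}=\tfrac{n+k+1}{id}$, the summands become exactly $\binom{\frac{n+1}{id}-1}{\frac{k}{id}}\binom{\frac{n+k+1}{id}-1}{\frac{k}{id}}$, which yields the claimed formula.

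The step I expect to require the most care is the \emph{passage to the fixed-ratio, one-dimensional inversion}: the relation \eqref{eq:gamma_simion} is a sum over $d\mid\gcd(m,j)$ in which both arguments are divided by $d$ simultaneously, so it is not literally a divisor sum in a single variable, and one must justify that restricting to a fixed coprime ratio $(m_0,j_0)$ legitimately reduces it to $\Sigma(t)=\sum_{s\mid t}H(s)$. I would also double-check the $\gcd$ bookkeeping — that $d\mid\gcd(n+1,k)$ forces $\gcd(\tfrac{n+1}{d},\tfrac{k}{d})=\tfrac{1}{d}\gcd(n+1,k)$ — and confirm that the reindexing $i=t/s$ reproduces the classical $\mu$-weights over precisely the summation range appearing in the statement; these are routine but must align exactly.
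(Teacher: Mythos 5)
Your proposal is correct and is essentially the paper's own argument: the paper inverts the same relation $g(n+1,k)=\sum_{d\mid\gcd(n+1,k)}\gamma_{\frac{n+1}{d}}(1,\frac{k}{d})$ via M\"obius inversion on the two-variable poset $\{(p,q)\}$, observing that the relevant interval is isomorphic to the divisor lattice of $d$ so that its M\"obius function is the classical $\mu(d)$. Your ``fixed coprime ratio'' reduction to a one-dimensional divisor sum is exactly that observation in different packaging, so the two proofs coincide in substance.
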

    \begin{proof}
        Consider the poset $\mathcal{P}=\{(p,q)\in \Z\times\Z\mid p>q>0\}$ ordered by
        \begin{equation*}
        \begin{split}
            (p,q)\leq (p',q') &\Leftrightarrow \text{ there exists }d\in\Z \text{ such that }p'=dp\text{ and }q'=dq.
        \end{split}
        \end{equation*} That is, $(p,q)\leq (p',q')$ if and only if $(p,q)=(\frac{p'
            }{d}, \frac{q'}{d})\text{ for some }d\mid\mathrm{gcd}(p',q')$.
        Define the integer valued functions $f$ and $g$ on $\mathcal{P}$ by $f(r,s)=\gamma_{r}(1,s)$ and $g(p,q)={p-1\choose q}{p+q-1\choose q}$, respectively. Then we can rewrite \eqref{eq:gamma_simion} as
        \begin{equation*}
            g(n+1,k)={n\choose k}{n+k\choose k}=\sum_{d\mid k}\gamma_{n+1}(d,k)=\sum_{d\mid\mathrm{gcd}(n+1,k)}\gamma_{\frac{n+1}{d}}(1,\frac{k}{d}) =\sum_{d\mid\mathrm{gcd}(n+1,k)}f(\frac{n+1}{d},\frac{k}{d}),
        \end{equation*} where the third identity comes from the fact $\gamma_{n+1}(d,k)=0$ for $d\nmid n+1$.
        From the M\"{o}bius inversion formula, we get
        \begin{equation}\label{eq:use_inversion_1}
            f(n+1,k) =\sum_{d\mid\mathrm{gcd}(n+1,k)}g(\frac{n+1}{d},\frac{k}{d}) \boldsymbol{\mu}((\frac{n+1}{d},\frac{k}{d}),(n+1,k)),
        \end{equation} where $\boldsymbol{\mu}$ is the M\"{o}bius function of the poset $\mathcal{P}$.
        Note that the closed interval $[(\frac{n+1}{d},\frac{k}{d}),(n+1,k))]$ of $\mathcal{P}$ is isomorphic to the poset $\mathcal{Q}=\{i\colon i\mid d\}$ with $i\leq_{\mathcal{Q}}i'\Leftrightarrow i\mid i'$ via the correspondence $i(\frac{n+1}{d},\frac{k}{d})\in\mathcal{P}\leftrightarrow i\in\mathcal{Q}$. Hence $\boldsymbol{\mu}((\frac{n+1}{d},\frac{k}{d}),(n+1,k))$ is equal to $\boldsymbol{\mu}(\mathcal{Q})=\mu(d)$.
        Hence from~\eqref{eq:use_inversion_1} we get the following:
        \begin{equation*}
            \gamma_{n+1}(d,k)=f(\frac{n+1}{d},\frac{k}{d}) =\sum_{i \mid \mathrm{gcd}(\frac{n+1}{d},\frac{k}{d})}g\left(\frac{n+1}{id},\frac{k}{id}\right) {\mu}(i).
        \end{equation*}
        This proves the proposition.
    \end{proof}

    Hence if we know one of $\alpha_{n+1}(d,k)$ and $\beta_{n+1}(d,k)$, then the other follows from $\gamma_{n+1}(d,k)$. We will discuss the computation of $\beta_{n+1}(d,k)$ in Section~\ref{sec:annular_noncrossing}.

\section{Hilbert series of the subrings of $H_T^\ast(M_{C_{n+1}};\C)$ determined by nested sets}\label{sec:subring_nested_set}
    In this section, we study the action of the dihedral group $\Dih_{n+1}$ on $H_T^\ast(M_{C_{n+1}};\C)$. For each nested set $N\in\mathcal{N}(C_{n+1})$, we first describe the subring of $H_T^\ast(M_{C_{n+1}};\C)$ determined by~$N$, and then compute its Hilbert series.

    For simplicity, we set
    \begin{equation*}
        \vx_N=\left\{\begin{array}{ll}
            \prod_{I\in N}x_I&\text{ for }N\in \mathcal{N}(C_{n+1})\setminus\emptyset,\\
            1 &\text{ for }N=\emptyset.
        \end{array}\right.
    \end{equation*}
    Then there is a natural action of $\Dih_{n+1}$ on $H_T^\ast(M_{C_{n+1}};\C)$; for every $\phi\in \Dih_{n+1}$ and a nested set $N\in \mathcal{N}(C_{n+1})$, $$\phi\cdot(\prod_{I\in N}x_I)=\prod_{I\in N} x_{\phi\cdot I}.$$
    Then $H_T^\ast(M_{C_{n+1}};\C)$ is isomorphic to
    \begin{equation}\label{eq:decom2_equiv_coho}
        \bigoplus_{N\in{\mathcal{N}}(C_{n+1})}\C[x_I\mid I\in N]\vx_N \cong \bigoplus_{N\in{\mathcal{N}}(C_{n+1})}\bigoplus_{\va\in(\Z_{>0})^{N}}\C\langle \prod_{I\in N}x_I^{a_I}\rangle,
    \end{equation}
    where $\va=(a_I\mid I\in N)\in (\Z_{>0})^{N}$.
    For simplicity, set $\vx_N^\va:=\prod_{I\in N}x_I^{a_I}$. Then the action of $\Dih_{n+1}$ on $H_T^\ast(M_{C_{n+1}};\C)$ is defined by $$\phi \cdot\vx_N^\va=\vx_{\phi\cdot N}^\va,\text{ that is, }\phi \cdot \prod_{I\in N}x_I^{a_I}=\prod_{I\in N}x_{\phi\cdot I}^{a_I}\text{ for every }\phi\in\Dih_{n+1}.$$ Hence $\C[x_I\mid I\in N]\vx_N$ is $(\Dih_{n+1})_N$-stable.

    Let $N$ be a nested set in $\mathcal{N}(C_{n+1})$ whose isotropy group $(\Dih_{n+1})_N$ is isomorphic to $\Dih_d$ or $\cC_d$. Without loss of generality, it is enough to consider the cases
        \begin{equation*}
            (\Dih_{n+1})_N=\begin{cases}
                \langle \sigma_{\frac{n+1}{d}}\rangle &\text{ if }(\Dih_{n+1})_N\cong\cC_d,\\
                \langle \sigma_{\frac{n+1}{d}},\tau\rangle &\text{ if }(\Dih_{n+1})_N\cong\Dih_d.
            \end{cases}
        \end{equation*}
    For simplicity, we write $\sigma_N:=\sigma_{\frac{n+1}{d}}$. Then $d\mid |N|$ and we can decompose $N$ into the nested sets $N_1,\ldots,N_d$ such that $(\sigma_N)^{i-1}\cdot N_1=N_i$. Hence when $(\Dih_{n+1})_{N}=\cC_d$, we label the elements of each $N_i$ as follows:
        \begin{equation*}
            N_i=\{A_{i,j}\mid 1\leq j\leq a\}
        \end{equation*}
    such that $(\sigma_N)^{i-1}\cdot A_{1,j}=A_{i,j}$. Note that if $(\Dih_{n+1})_N\cong\Dih_d$, then $(\Dih_{n+1})_{N_1}=\Dih_1$ and hence there is a reflection $\tau'\in (\Dih_{n+1})_N$ such that $\tau'\cdot N_1=N_1$, and some of the elements in $N$ are fixed by the reflection $\tau'$. Let $a$ be the number of elements in $N_1$ such that $\tau'\cdot I=I$. Then $|N|-a$ is even, say $2b$. Then we label the elements of each $N_i$ as
        \begin{equation}\label{eq:sets_A_B}
            N_i=\{A_{i,j}\mid 1\leq j\leq a\}\cup\{B_{i,k}\mid 1\leq k\leq 2b\}
        \end{equation}
    such that $(\sigma_N)^{i-1}\cdot A_{1,j}=A_{i,j}$, $(\sigma_N)^{i-1}\cdot B_{1,k}=B_{i,k}$, and $\tau'\cdot B_{1,k}=B_{1,2b+1-k}$ for each $i\in [d]$, $j\in [a]$, and $k\in [2b]$.

    For simplicity, for each $\vx_N^\va\in H_T^\ast(M_{C_{n+1}};\C)$ we denote by $a_{i,j}$ (respectively, $b_{i,j}$) the exponent of $x_{A_{i,j}}$ (respectively, $x_{B_{i,j}}$). That is,
    \begin{equation*}
        \vx_N^\va=\left(\prod_{1\leq i \leq d\atop 1\leq j\leq a}x_{A_{i,j}}^{a_{i,j}}\right)\left(\prod_{1\leq i\leq d \atop 1\leq j \leq 2b}x_{B_{i,j}}^{b_{i,j}}\right).
    \end{equation*} Note that the exponents $a_{i,j}$ and $b_{i,j}$ are positive integers.
    Then we can compute the isotropy group $(\Dih_{n+1})_{\vx_N^\va}$ for each $\vx_N^\va\in H_T^\ast(M_{C_{n+1}};\C)$ as follows.
    \begin{lemma}\label{lem:iso_vx_va}
        For the action of $\Dih_{n+1}$ on $H_T^\ast(M_{C_{n+1}};\C)$, the isotropy group $(\Dih_{n+1})_{\vx_N^\va}$ is a subgroup of the isotropy group $(\Dih_{n+1})_N$ with respect to the action of $\Dih_{n+1}$ on $\mathcal{N}(C_{n+1})$.
    \end{lemma}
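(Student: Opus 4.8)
The plan is to read off the nested set $N$ from the monomial $\vx_N^\va$ by looking at which variables actually occur in it, and to use that $\Dih_{n+1}$ permutes the natural basis of $H_T^\ast(M_{C_{n+1}};\C)$ displayed in~\eqref{eq:decom2_equiv_coho}. The entire argument is a support comparison; the only point demanding care is the strict positivity of the exponent vector.

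First I would record the structural input. By~\eqref{eq:decom2_equiv_coho}, the collection $\{\vx_M^{\mathbf{b}}\mid M\in\mathcal{N}(C_{n+1}),\ \mathbf{b}\in(\Z_{>0})^M\}$ is a $\C$-basis of $H_T^\ast(M_{C_{n+1}};\C)$; in particular these monomials are linearly independent, so two of them that are equal as ring elements must be the identical basis element. The hypothesis $\mathbf{b}\in(\Z_{>0})^M$ is what I would lean on: every variable $x_I$ with $I\in M$ appears in $\vx_M^{\mathbf{b}}$ with a strictly positive exponent, so the set of variables occurring in $\vx_M^{\mathbf{b}}$ is \emph{exactly} $M$. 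Thus the basis monomial $\vx_M^{\mathbf{b}}$ determines its indexing nested set $M$ unambiguously.

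Next I would check that each $\phi\in\Dih_{n+1}$ carries a basis monomial to another one. By the definition of the action, $\phi\cdot\vx_N^\va=\vx_{\phi\cdot N}^\va=\prod_{I\in N}x_{\phi\cdot I}^{a_I}$. Since the natural action preserves nested sets, $\phi\cdot N\in\mathcal{N}(C_{n+1})$, and reindexing by $J=\phi\cdot I$ exhibits $\phi\cdot\vx_N^\va$ as the basis monomial indexed by $\phi\cdot N$ with the (still strictly positive) transported exponent vector $(a_{\phi^{-1}\cdot J})_{J\in\phi\cdot N}$. In particular the set of variables occurring in $\phi\cdot\vx_N^\va$ is exactly $\phi\cdot N$.

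Finally I would conclude. Suppose $\phi\in(\Dih_{n+1})_{\vx_N^\va}$, that is, $\phi\cdot\vx_N^\va=\vx_N^\va$. Then $\vx_{\phi\cdot N}^\va=\vx_N^\va$ is an equality of two basis elements, so by linear independence they are the same basis element; comparing their supports (using the previous two paragraphs) gives $\phi\cdot N=N$, i.e. $\phi\in(\Dih_{n+1})_N$. Hence $(\Dih_{n+1})_{\vx_N^\va}\subseteq(\Dih_{n+1})_N$, which is the assertion. There is no genuine obstacle in this argument; the one subtlety worth flagging is that strict positivity of $\va$ is indispensable, for it guarantees that no variable drops out of the support, so that fixing the monomial really pins down the full nested set $N$ rather than merely some subset of it.
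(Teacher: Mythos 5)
Your proof is correct and follows essentially the same route as the paper's, which likewise argues by contraposition that $\phi\cdot N\neq N$ forces $\phi\cdot\vx_N^\va=\vx_{\phi\cdot N}^\va\neq\vx_N^\va$; you simply make explicit the justification (linear independence of the basis monomials in~\eqref{eq:decom2_equiv_coho} and the fact that strict positivity of $\va$ makes the support of the monomial equal to $N$) that the paper leaves implicit. The remainder of the paper's proof classifies exactly which subgroup $(\Dih_{n+1})_{\vx_N^\va}$ is, which goes beyond the stated lemma and is not needed for the containment you were asked to prove.
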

    \begin{proof}
        Note that if $\phi\not\in(\Dih_{n+1})_N$, then $\phi\cdot N\neq N$. Hence $\phi\cdot \vx_N^\va=\vx_{\phi\cdot N}^\va\neq\vx_N^\va$.

        For a divisor $\ell$ of $d$, if $a_{i,j}=a_{i',j}$ for $i\equiv i'\pmod \ell$, then $(\sigma_N)^\ell\cdot\vx_N^\va=\vx_N^\va$. If there is a reflection $\tau'\in (\Dih_{n+1})_N$ satisfying the condition
        \begin{equation}\label{eq:iso_cyclic}
            a_I=a_{\tau'\cdot I}\text{ for any }I\in N,
        \end{equation} then $\tau'\cdot\vx_N^\va=\vx_N^\va.$

        Let $\ell$ be the smallest divisor of $d$ satisfying
        \begin{equation}\label{eq:cond_cyclic}
            a_{i,j}=a_{i',j}\text{ and }b_{i,k}=b_{i',k} \text{ for }i\equiv i'\pmod {\frac{d}{\ell}}
        \end{equation}
        for each $1\leq j\leq a$ and $1\leq k\leq 2b$.
        Then for $i=1,\ldots,\ell$, we may set
        $$L_i:=\left\{A_{p,j},\,B_{p,k}\,\middle|\, \frac{(i-1)d}{\ell}+1\leq p\leq \frac{id}{\ell},\,1\leq j\leq a,\text{ and }1\leq k\leq 2b\right\}.$$
        Then $(\sigma_N)^{\frac{d}{\ell}(i-1)}\cdot L_1=L_i$ for $1\leq i\leq {\ell}$.
        For the monomial $\vx_N^\va$ satisfying~\eqref{eq:cond_cyclic}, there exists a reflection satisfying the condition~\eqref{eq:iso_cyclic} if and only if there exists a reflection $\tau''$ such that $\tau''\cdot \vx_{L_1}^\va=\vx_{L_1}^\va$. Therefore, $(\Dih_{n+1})_{\vx_N^\va}\cong \Dih_\ell$ if and only if the exponent $\va$ satisfies~\eqref{eq:cond_cyclic} and
        \begin{equation}\label{eq:cond_reflect}
            a_{i,j}=a_{i',j}\text{ and }b_{i,k}=b_{i',k}\text{ for }i+i'=\frac{d}{\ell}+1,\,k+k'=2b+1
        \end{equation}
        for $1\leq i, i'\leq \frac{d}{\ell}$. If $\va$ satisfies~\eqref{eq:cond_cyclic} but not~\eqref{eq:cond_reflect}, then $(\Dih_{n+1})_{\vx_N^\va}\cong \cC_\ell$. This proves that $(\Dih_{n+1})_{\vx_N^\va}$ is a subgroup of $(\Dih_{n+1})_N$ in any case.
    \end{proof}

    \begin{example}\label{ex:iso_dih}
    Let us consider the cycle graph $C_{24}$ and a nested set $$N=\left\{\{i\},\{j-1,j,j+1\}\mid i\in\{1,3,...,23\}\text{ and }j\in\{2,6,10,14,18,22\}\right\}.$$ Then $(\Dih_{24})_N=\langle \sigma_4,\tau\rangle\cong\Dih_6$, see Figure~\ref{fig:isotropy_Dih}-(a).
        \begin{figure}[t]
          \centering
          \begin{subfigure}{.45\textwidth}
            \centering
            \begin{tikzpicture}[scale=.7]
                \draw (0,0) circle (2.5cm);
                \foreach \x in {1,2,...,24}
                    {
                    \filldraw[fill=white] (15*\x:2.5cm) circle(6pt);
                    \draw (15*\x-15:2.5cm) node {\tiny$\x$};
                    }
                \draw (2.5,0) node {\tiny$1$};
                \foreach \x in {105,135,...,435}
                {
                \draw[red] (\x:2.5) circle (7pt);
                }
                \draw[shift={(120:2.5cm)},rotate=30,red] (0,0) ellipse(30pt and 13pt);
                \draw[shift={(180:2.5cm)},rotate=90,red] (0,0) ellipse(30pt and 13pt);
                \draw[shift={(240:2.5cm)},rotate=150,red] (0,0) ellipse(30pt and 13pt);
                \draw[shift={(300:2.5cm)},rotate=30,red] (0,0) ellipse(30pt and 13pt);
                \draw[shift={(0:2.5cm)},rotate=90,red] (0,0) ellipse(30pt and 13pt);
                \draw[shift={(60:2.5cm)},rotate=150,red] (0,0) ellipse(30pt and 13pt);
            \end{tikzpicture}
            \subcaption{$(\Dih_{24})_N\cong\Dih_6$}
          \end{subfigure}
          \begin{subfigure}{.45\textwidth}
            \centering
            \begin{tikzpicture}[scale=.7]
                \fill[shift={(120:2.5cm)},rotate=30,blue!20] (0,0) ellipse(30pt and 13pt);
                \fill[shift={(180:2.5cm)},rotate=90,purple!20] (0,0) ellipse(30pt and 13pt);
                \fill[shift={(240:2.5cm)},rotate=150,blue!20] (0,0) ellipse(30pt and 13pt);
                \fill[shift={(300:2.5cm)},rotate=30,purple!20] (0,0) ellipse(30pt and 13pt);
                \fill[shift={(0:2.5cm)},rotate=90,blue!20] (0,0) ellipse(30pt and 13pt);
                \fill[shift={(60:2.5cm)},rotate=150,purple!20] (0,0) ellipse(30pt and 13pt);
                \draw (0,0) circle (2.5cm);
                \foreach \x in {1,2,...,24}
                    {
                    \filldraw[fill=white] (15*\x:2.5cm) circle(6pt);
                    \draw (15*\x-15:2.5cm) node {\tiny$\x$};
                    }
                \draw (2.5,0) node {\tiny$1$};
                    \fill[yellow!50] (105:2.5cm) circle(6pt);
                    \fill[yellow!50] (225:2.5cm) circle(6pt);
                    \fill[yellow!50] (345:2.5cm) circle(6pt);
                    \fill[orange!50] (135:2.5cm) circle(6pt);
                    \fill[orange!50] (255:2.5cm) circle(6pt);
                    \fill[orange!50] (375:2.5cm) circle(6pt);
                    \fill[green!50] (165:2.5cm) circle(6pt);
                    \fill[green!50] (285:2.5cm) circle(6pt);
                    \fill[green!50] (405:2.5cm) circle(6pt);
                    \fill[gray!30] (195:2.5cm) circle(6pt);
                    \fill[gray!30] (315:2.5cm) circle(6pt);
                    \fill[gray!30] (435:2.5cm) circle(6pt);
                \foreach \x in {105,135,...,435}
                {
                \draw[red] (\x:2.5) circle (7pt);
                }
                \draw[shift={(120:2.5cm)},rotate=30,red] (0,0) ellipse(30pt and 13pt);
                \draw[shift={(180:2.5cm)},rotate=90,red] (0,0) ellipse(30pt and 13pt);
                \draw[shift={(240:2.5cm)},rotate=150,red] (0,0) ellipse(30pt and 13pt);
                \draw[shift={(300:2.5cm)},rotate=30,red] (0,0) ellipse(30pt and 13pt);
                \draw[shift={(0:2.5cm)},rotate=90,red] (0,0) ellipse(30pt and 13pt);
                \draw[shift={(60:2.5cm)},rotate=150,red] (0,0) ellipse(30pt and 13pt);
            \end{tikzpicture}
            \subcaption{$\sigma_8\cdot\vx_N^\va=\vx_N^\va$}
          \end{subfigure}
          \begin{subfigure}{.45\textwidth}
            \centering
            \begin{tikzpicture}[scale=.7]
                \fill[shift={(120:2.5cm)},rotate=30,blue!20] (0,0) ellipse(30pt and 13pt);
                \fill[shift={(180:2.5cm)},rotate=90,blue!20] (0,0) ellipse(30pt and 13pt);
                \fill[shift={(240:2.5cm)},rotate=150,blue!20] (0,0) ellipse(30pt and 13pt);
                \fill[shift={(300:2.5cm)},rotate=30,blue!20] (0,0) ellipse(30pt and 13pt);
                \fill[shift={(0:2.5cm)},rotate=90,blue!20] (0,0) ellipse(30pt and 13pt);
                \fill[shift={(60:2.5cm)},rotate=150,blue!20] (0,0) ellipse(30pt and 13pt);
                \draw (0,0) circle (2.5cm);
                \foreach \x in {1,2,...,24}
                    {
                    \filldraw[fill=white] (15*\x:2.5cm) circle(6pt);
                    \draw (15*\x-15:2.5cm) node {\tiny$\x$};
                    }
                \draw (2.5,0) node {\tiny$1$};
                    \fill[yellow!50] (105:2.5cm) circle(6pt);
                    \fill[yellow!50] (435:2.5cm) circle(6pt);
                    \fill[green!50] (135:2.5cm) circle(6pt);
                    \fill[green!50] (405:2.5cm) circle(6pt);
                    \fill[green!50] (165:2.5cm) circle(6pt);
                    \fill[green!50] (375:2.5cm) circle(6pt);
                    \fill[yellow!50] (195:2.5cm) circle(6pt);
                    \fill[yellow!50] (345:2.5cm) circle(6pt);
                    \fill[yellow!50] (225:2.5cm) circle(6pt);
                    \fill[yellow!50] (315:2.5cm) circle(6pt);
                    \fill[green!50] (255:2.5cm) circle(6pt);
                    \fill[green!50] (285:2.5cm) circle(6pt);
                \foreach \x in {105,135,...,435}
                {
                \draw[red] (\x:2.5) circle (7pt);
                }
                \draw[shift={(120:2.5cm)},rotate=30,red] (0,0) ellipse(30pt and 13pt);
                \draw[shift={(180:2.5cm)},rotate=90,red] (0,0) ellipse(30pt and 13pt);
                \draw[shift={(240:2.5cm)},rotate=150,red] (0,0) ellipse(30pt and 13pt);
                \draw[shift={(300:2.5cm)},rotate=30,red] (0,0) ellipse(30pt and 13pt);
                \draw[shift={(0:2.5cm)},rotate=90,red] (0,0) ellipse(30pt and 13pt);
                \draw[shift={(60:2.5cm)},rotate=150,red] (0,0) ellipse(30pt and 13pt);
            \end{tikzpicture}
            \subcaption{$\sigma_8\cdot\vx_N^{\va}=\tau\cdot\vx_N^{\va}=\vx_N^{\va}$}
          \end{subfigure}
          \begin{subfigure}{.45\textwidth}
            \centering
            \begin{tikzpicture}[scale=.7]
                \fill[shift={(120:2.5cm)},rotate=30,blue!20] (0,0) ellipse(30pt and 13pt);
                \fill[shift={(180:2.5cm)},rotate=90,purple!20] (0,0) ellipse(30pt and 13pt);
                \fill[shift={(240:2.5cm)},rotate=150,brown!50] (0,0) ellipse(30pt and 13pt);
                \fill[shift={(300:2.5cm)},rotate=30,brown!50] (0,0) ellipse(30pt and 13pt);
                \fill[shift={(0:2.5cm)},rotate=90,purple!20] (0,0) ellipse(30pt and 13pt);
                \fill[shift={(60:2.5cm)},rotate=150,blue!20] (0,0) ellipse(30pt and 13pt);
                \draw (0,0) circle (2.5cm);
                \foreach \x in {1,2,...,24}
                    {
                    \filldraw[fill=white] (15*\x:2.5cm) circle(6pt);
                    \draw (15*\x-15:2.5cm) node {\tiny$\x$};
                    }
                \draw (2.5,0) node {\tiny$1$};
                    \fill[yellow!50] (105:2.5cm) circle(6pt);
                    \fill[yellow!50] (435:2.5cm) circle(6pt);
                    \fill[green!50] (135:2.5cm) circle(6pt);
                    \fill[green!50] (405:2.5cm) circle(6pt);
                    \fill[orange!50] (165:2.5cm) circle(6pt);
                    \fill[orange!50] (375:2.5cm) circle(6pt);
                    \fill[gray!30] (195:2.5cm) circle(6pt);
                    \fill[gray!30] (345:2.5cm) circle(6pt);
                    \fill[olive!50] (225:2.5cm) circle(6pt);
                    \fill[olive!50] (315:2.5cm) circle(6pt);
                    \fill[red!50] (255:2.5cm) circle(6pt);
                    \fill[red!50] (285:2.5cm) circle(6pt);
                \foreach \x in {105,135,...,435}
                {
                \draw[red] (\x:2.5) circle (7pt);
                }
                \draw[shift={(120:2.5cm)},rotate=30,red] (0,0) ellipse(30pt and 13pt);
                \draw[shift={(180:2.5cm)},rotate=90,red] (0,0) ellipse(30pt and 13pt);
                \draw[shift={(240:2.5cm)},rotate=150,red] (0,0) ellipse(30pt and 13pt);
                \draw[shift={(300:2.5cm)},rotate=30,red] (0,0) ellipse(30pt and 13pt);
                \draw[shift={(0:2.5cm)},rotate=90,red] (0,0) ellipse(30pt and 13pt);
                \draw[shift={(60:2.5cm)},rotate=150,red] (0,0) ellipse(30pt and 13pt);
            \end{tikzpicture}
            \subcaption{$\tau\cdot\vx_N^{\va}=\vx_N^{\va}$}
          \end{subfigure}
          \caption{Isotropy groups of $N$ and $\vx_N^{\va}$'s}\label{fig:isotropy_Dih}
        \end{figure}
    We set
    \begin{equation*}
        \begin{array}{llll}
            N_1:\qquad&A_{1,1}=\{1,2,3\},&B_{1,1}=\{1\},&B_{1,2}=\{3\}\\
            N_2:\qquad&A_{2,1}=\{5,6,7\},&B_{2,1}=\{5\},&B_{2,2}=\{7\}\\
            N_3:\qquad&A_{3,1}=\{9,10,11\},&B_{3,1}=\{9\},&B_{3,2}=\{11\}\\
            N_4:\qquad&A_{4,1}=\{13,14,15\},&B_{4,1}=\{13\},&B_{4,2}=\{15\}\\
            N_5:\qquad&A_{5,1}=\{17,18,19\},&B_{5,1}=\{17\},&B_{5,2}=\{19\}\\
            N_6:\qquad&A_{6,1}=\{21,22,23\},&B_{6,1}=\{21\},&B_{6,2}=\{23\}
        \end{array}
    \end{equation*}

    \begin{enumerate}
        \item If $a_{1,1}=\cdots=a_{6,1}$ and $b_{1,1}=\cdots=b_{6,1}=b_{2,1}=\cdots=b_{6,2}$, then $(\Dih_{24})_{\vx_N^\va}=(\Dih_{24})_N\cong\Dih_6$.
        \item Assume that $a_{1,1}=a_{3,1}=a_{5,1}$, $a_{2,1}=a_{4,1}=a_{6,1}$, $b_{1,1}=b_{3,1}=b_{5,1}$, $b_{1,2}=b_{3,2}=b_{5,2}$, $b_{2,1}=b_{4,1}=b_{6,1}$, and $b_{2,2}=b_{4,2}=b_{3,2}$. If  $b_{1,1}\neq b_{2,1}$, $b_{1,2}\neq b_{2,2}$, or $a_{1,1}\neq a_{2,1}$, then $L_1=N_1\cup N_2$, $L_2=N_3\cup N_4$, $L_3=N_5\cup N_6$, and $(\Dih_{n+1})_{\vx_{L_1}^\va}=\langle e\rangle$. Hence $(\Dih_{24})_{\vx_N^\va}$ is $\langle \sigma_8\rangle\cong\cC_3$, see Figure~\ref{fig:isotropy_Dih}-(b).
        \item Assume that $a_{1,1}=a_{2,1}=a_{3,1}=a_{4,1}=a_{5,1}=a_{6,1}$ and $b_{1,1}=b_{2,2}=b_{3,1}=b_{4,2}=b_{5,1}=b_{6,2}$ and  $b_{1,2}=b_{2,1}=b_{3,2}=b_{4,1}=b_{5,2}$. If $b_{1,1}\neq b_{1,2}$, then $L_1=N_1\cup N_2$, $L_2=N_3\cup N_4$, $L_3=N_5\cup N_6$, and $(\Dih_{n+1})_{\vx_{L_1}^\va}= \Dih_1$. Hence $(\Dih_{24})_{\vx_N^\va}$ is $\langle\sigma_8,\tau\rangle\cong\Dih_3$, see Figure~\ref{fig:isotropy_Dih}-(c).
        \item Assume that $a_{1,1}=a_{6,1}$, $a_{2,1}=a_{4,1}$, $a_{3,1}=a_{4,1}$ $b_{1,1}=b_{6,2}$, $b_{1,2}=b_{6,1}$, $b_{2,1}=b_{5,2}$, $b_{2,2}=b_{5,1}$, $b_{3,1}=b_{4,2}$, and $b_{3,2}=b_{4,1}$. If $a_{1,1}\neq a_{2,1}$, $a_{2,1}\neq a_{3,1}$, $b_{1,j}\neq b_{2,j}$, or $b_{2,j}\neq b_{3,j}$, then $L_1=N$ and $(\Dih_{24})_{\vx_N^\va}$ is $\langle\tau\rangle\cong\Dih_1$, see Figure~\ref{fig:isotropy_Dih}-(d).
    \end{enumerate}
    \end{example}

    Let us consider the representation of $(\Dih_{n+1})_N$ on $\C[x_I\mid I\in N]\vx_N$. If $(\Dih_{n+1})_N$ is trivial, then $(\Dih_{n+1})_{\vx_N^\va}$ is also trivial, and the Hilbert series of $\C[x_I\mid I\in N]\vx_N$ is $\left(\frac{t}{1-t}\right)^{|N|}.$ If $(\Dih_{n+1})_N=\langle\tau\rangle\cong\Dih_1$, then $(\Dih_{n+1})_{\vx_N^\va}$ is trivial or $\Dih_1$.
    Hence $$\C[x_I\mid I\in N]{\vx_N}=\mathrm{Ind}_{\Dih_1}^{\Dih_1}\C\langle \vx_N^\va\mid (\Dih_{n+1})_{\vx_N^\va}\cong\Dih_1\rangle\oplus \mathrm{Ind}^{\Dih_1}_{\langle e\rangle}\C\langle \vx_N^\va\mid (\Dih_{n+1})_{\vx_N^\va}\cong\langle e\rangle\,\rangle.$$
    Let $|N|=a+2b$, where $a$ is the number of elements $I\in N$ such that $\tau\cdot I=I$. Then the Hilbert series of $\C\langle \vx_N^\va\mid (\Dih_{n+1})_{\vx_N^\va}\cong\Dih_1\rangle$ and $\C\langle \vx_N^\va\mid (\Dih_{n+1})_{\vx_N^\va}\cong\langle e\rangle\,\rangle$ are $$\left(\frac{t}{1-t}\right)^a \left(\frac{t^2}{1-t^2}\right)^{b} \qquad \text{and}\qquad \frac{1}{2}\left\{\left(\frac{t}{1-t}\right)^{a+2b}-\left(\frac{t}{1-t}\right)^{a} \left(\frac{t^2}{1-t^2}\right)^{b}\right\},$$ respectively.
    In general, if $(\Dih_{n+1})_N=H<\Dih_{n+1}$, then
    \begin{equation*}
        \C[x_I\mid I\in N]\vx_N=\bigoplus_{H'<H}\mathrm{Ind}_{H'}^{H}\C\langle\vx_N^\va\mid (H)_{\vx_N^\va}\cong H'\rangle.
    \end{equation*}
    That is, if $(\Dih_{n+1})_N\cong\cC_d$, then we have
    \begin{equation*}
    \C[x_I\mid I\in N]\vx_N =\bigoplus_{\ell\mid d}\mathrm{Ind}_{\cC_{\ell}}^{\cC_{d}}\C\langle\vx_N^\va\mid (\cC_d)_{\vx_N^\va}\cong\cC_{\ell}\rangle;
    \end{equation*}
    if $(\Dih_{n+1})_N\cong\Dih_d$, then we have
    \begin{equation*}
    \C[x_I\mid I\in N]\vx_N =\bigoplus_{\ell\mid d}\left(\mathrm{Ind}_{\Dih_{\ell}}^{\Dih_{d}}\C\langle\vx_N^\va\mid (\Dih_d)_{\vx_N^\va}\cong\Dih_{\ell}\rangle \oplus \mathrm{Ind}_{\cC_{\ell}}^{\Dih_{d}}\C\langle\vx_N^\va\mid (\Dih_d)_{\vx_N^\va}\cong\cC_{\ell}\rangle\right).
    \end{equation*}

    \begin{lemma}\label{lem:hil_cyc_ell}
        When $H:=(\Dih_{n+1})_N$ is isomorphic to $\cC_d$, the Hilbert series of $\C\langle\vx_N^\va\mid (H)_{\vx_N^\va}\cong\cC_{\ell}\rangle$ is
    $$\frac{\ell}{d}\sum_{m\mid\frac{d}{\ell}}\mu(m)\left(\frac{t^{m\ell}}{1-t^{m\ell}}\right)^{\frac{|N|}{m\ell}},$$
    where $\mu$ is the classical M\"obius function of number theory.
    \end{lemma}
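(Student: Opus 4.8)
The plan is to convert the statement into a weighted count of exponent arrays under a cyclic group action and then apply Möbius inversion over the divisor lattice of $d$. Recall that when $H=(\Dih_{n+1})_N\cong\cC_d$ we have a decomposition $N=N_1\sqcup\cdots\sqcup N_d$ with $|N_1|=\cdots=|N_d|=a$ (so $|N|=da$) and $\sigma_N=\sigma_{\frac{n+1}{d}}$ acting by $(\sigma_N)^{i-1}\cdot N_1=N_i$. A monomial $\vx_N^\va$ is recorded by its exponent array $(a_{i,j})_{1\le i\le d,\,1\le j\le a}$ with each $a_{i,j}\in\Z_{>0}$, and the generator $\sigma_N$ acts by the cyclic shift $i\mapsto i+1\pmod d$ on the first index. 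Since this shift merely permutes the variables $x_I$, it preserves the total exponent $\sum_{i,j}a_{i,j}$; I let $t$ record this total exponent, so that each single variable $x_I$ contributes $\sum_{c\ge1}t^{c}=t/(1-t)$, consistent with the $\Dih_1$ series displayed just above.

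First I would compute, for each divisor $\ell\mid d$, the Hilbert series $G_\ell(t)$ of the span of those monomials fixed by $\cC_\ell=\langle(\sigma_N)^{d/\ell}\rangle$. By Lemma~\ref{lem:iso_vx_va}, in particular condition~\eqref{eq:cond_cyclic}, such a monomial is exactly one whose exponent array is $(d/\ell)$-periodic in the index $i$; it is therefore freely determined by the $|N|/\ell$ exponents $a_{i,j}$ with $1\le i\le d/\ell$, and each chosen value $c\ge1$ is then repeated $\ell$ times in the full array, contributing $\ell c$ to the total exponent. Hence each free exponent contributes $\sum_{c\ge1}t^{\ell c}=t^{\ell}/(1-t^{\ell})$, giving
$$G_\ell(t)=\left(\frac{t^{\ell}}{1-t^{\ell}}\right)^{|N|/\ell}.$$

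Next I would isolate the monomials whose isotropy group is \emph{exactly} $\cC_\ell$. Because $\langle(\sigma_N)^{d/\ell}\rangle\subseteq\langle(\sigma_N)^{d/\ell'}\rangle$ in $\cC_d$ if and only if $\ell\mid\ell'$, the series $G_\ell$ splits as $G_\ell=\sum_{\ell\,\mid\,\ell'\,\mid\,d}E_{\ell'}$, where $E_{\ell'}(t)$ denotes the Hilbert series of the span of monomials whose stabilizer is exactly $\cC_{\ell'}$. The poset of divisors of $d$ under divisibility carries the classical Möbius function, so Möbius inversion yields
$$E_\ell(t)=\sum_{\ell\,\mid\,\ell'\,\mid\,d}\mu(\ell'/\ell)\,G_{\ell'}(t)=\sum_{m\,\mid\,d/\ell}\mu(m)\left(\frac{t^{m\ell}}{1-t^{m\ell}}\right)^{|N|/(m\ell)},$$
after writing $\ell'=m\ell$ with $m\mid d/\ell$.

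Finally, the prefactor $\ell/d$ is read off from the induction bookkeeping. As $\sigma_N$ preserves degree, the $\cC_d$-action is compatible with the grading, so induction of a graded $\cC_\ell$-module multiplies its Hilbert series by $[\cC_d:\cC_\ell]=d/\ell$. Comparing graded dimensions in the decomposition $\C[x_I\mid I\in N]\vx_N=\bigoplus_{\ell\mid d}\mathrm{Ind}_{\cC_\ell}^{\cC_d}\,\C\langle\vx_N^\va\mid (H)_{\vx_N^\va}\cong\cC_\ell\rangle$, the $\ell$-summand must account, after inducing, for all monomials of stabilizer $\cC_\ell$, whose Hilbert series is $E_\ell(t)$; hence the summand itself has Hilbert series $(\ell/d)E_\ell(t)$, which is exactly the claimed formula. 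I expect the main obstacle to be this last identification — recognizing that the summand is effectively an orbit count, so that the induction multiplicity $d/\ell$ produces the factor $\ell/d$ — together with matching the containment of cyclic subgroups to divisibility so that the inversion is carried out in the correct poset; the fundamental-domain computation of $G_\ell$ is then routine.
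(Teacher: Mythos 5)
Your argument is correct and follows essentially the same route as the paper: compute the series of monomials fixed by $\cC_\ell$ as $\bigl(t^{\ell}/(1-t^{\ell})\bigr)^{|N|/\ell}$, Möbius-invert over the divisor lattice of $d$ to isolate those with stabilizer exactly $\cC_\ell$, and divide by the orbit size $d/\ell$ because the summand in the induced decomposition is spanned by orbit representatives. Your explicit justification of the factor $\ell/d$ is in fact more detailed than the paper's, which simply absorbs it into the ``inclusion-exclusion'' step.
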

    \begin{proof}
        Note that $(H)_{\vx_N^\va}\cong \cC_d$ if and only if $a_{i,j}=a_{i',j}$ for $1\leq i,i'\leq d$. Hence the Hilbert series of $\C\langle\vx_N^\va\mid (H)_{\vx_N^\va}\cong \cC_d\rangle$ is $\left(\frac{t^d}{1-t^d}\right)^{|N|/d}.$

        For two divisors $\ell$ and $\ell'$ of $d$, if $\ell\mid\ell'$, then $a_{i,j}=a_{i',j}$ for $i\equiv i'\pmod {\frac{d}{\ell'}}$ implies that $a_{i,j}=a_{i',j}$ for $i\equiv i'\pmod {\frac{d}{\ell}}$. Hence we need to use the inclusion-exclusion principle to find the Hilbert series of $\C\langle\vx_U^\va\mid (\cC_d)_{\vx_U^\va}\cong\cC_{\ell}\rangle$.

        Note that two divisors $\ell$ and $\ell'$ of $d$ satisfy $\ell\mid\ell'$ if and only if there is an integer $m\mid \frac{d}{\ell}$ such that $m\ell=\ell'$. Hence the inclusion-exclusion principle says that the Hilbert series of $\C\langle\vx_N^\va\mid (\Dih_d)_{\vx_N^\va}\cong\cC_{\ell}\rangle$ is
    $$\frac{\ell}{d}\sum_{m\mid\frac{d}{\ell}}\mu(m)\left(\frac{t^{m\ell}}{1-t^{m\ell}}\right)^{\frac{|N|}{m\ell}}.$$
    \end{proof}

    For example, for the nested set $N=\{1,12,4,45\}$ in Example~\ref{ex:iso_nested}, $(\Dih_6)_N\cong\cC_2$ and the Hilbert series of $\C\langle\vx_N^\va\mid (\cC_2)_{\vx_N^\va}\cong\langle e\rangle\,\rangle$ is $$\frac{1}{2}\left\{\left(\frac{t}{1-t}\right)^4 - \left(\frac{t^2}{1-t^2}\right)^2\right\}.$$

    \begin{lemma}\label{lem:hil_dih_ell}
    When $H:=(\Dih_{n+1})_N$ is isomorphic to $\Dih_d$ for $d<n+1$, the Hilbert series of $\C\langle\vx_N^\va\mid (H)_{\vx_N^\va}\cong\Dih_{\ell}\rangle$ is
    \begin{equation*}
        \frac{\ell}{d}\sum_{m\mid \frac{d}{\ell}}\mu(m)\left(\frac{t^{m\ell}}{1-t^{m\ell}}\right)^{\frac{ad}{m\ell}} \left(\frac{t^{2m\ell}}{1-t^{2m\ell}}\right)^{\frac{bd}{m\ell}},
    \end{equation*} and  the Hilbert series of $\C\langle\vx_N^\va\mid (H)_{\vx_N^\va}\cong\cC_{\ell}\rangle$ is
    $$\frac{\ell}{2d}\sum_{m\mid\frac{d}{\ell}}\left\{ \left(\frac{t^{m\ell}}{1-t^{m\ell}}\right)^{\frac{a+2b}{m\ell}} - \left(\frac{t^{m\ell}}{1-t^{m\ell}}\right)^{\frac{ad}{m\ell}} \left(\frac{t^{2{m\ell}}}{1-t^{2{m\ell}}}\right)^{\frac{bd}{m\ell}}\right\}.$$
    \end{lemma}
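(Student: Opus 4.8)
The plan is to follow the template of Lemma~\ref{lem:hil_cyc_ell}, now carrying the reflection $\tau$ through the whole argument. Write $\sigma_N=\sigma_{\frac{n+1}{d}}$ and, for each divisor $\ell'$ of $d$, set $\cC_{\ell'}=\langle\sigma_N^{d/\ell'}\rangle$ and $\Dih_{\ell'}=\langle\sigma_N^{d/\ell'},\tau\rangle$, so that $\cC_\ell\le\cC_{\ell'}$ and $\Dih_\ell\le\Dih_{\ell'}$ whenever $\ell\mid\ell'$. In the decomposition $\C[x_I\mid I\in N]\vx_N=\bigoplus_{H'\le H}\mathrm{Ind}_{H'}^{H}\C\langle\vx_N^\va\mid (H)_{\vx_N^\va}\cong H'\rangle$, the multiplicity space attached to $H'$ satisfies $\dim\mathrm{Ind}_{H'}^{H}W=[H:H']\dim W$, so its Hilbert series is $\tfrac{1}{[H:H']}$ times the Hilbert series of the span of all monomials whose stabilizer is $\cong H'$. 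Since $[\Dih_d:\Dih_\ell]=d/\ell$ and $[\Dih_d:\cC_\ell]=2d/\ell$, this already accounts for the prefactors $\frac{\ell}{d}$ and $\frac{\ell}{2d}$, and reduces everything to counting, with a degree marker, the monomials whose stabilizer is exactly $\Dih_\ell$ (resp. exactly $\cC_\ell$).

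To count these I would first compute the two fixed-point series. A monomial $\vx_N^\va$ is fixed by $\Dih_{\ell'}$ (resp. $\cC_{\ell'}$) precisely when $\va$ is constant on each orbit of the variables $\{x_I\mid I\in N\}$ under $\Dih_{\ell'}$ (resp. $\cC_{\ell'}$); assigning one positive integer to each orbit and recording $t^{(\text{orbit length})\cdot(\text{exponent})}$ expresses the series as a product of factors $\frac{t^{s}}{1-t^{s}}$, one per orbit of length $s$. Under the purely cyclic $\cC_{\ell'}$ every orbit has length $\ell'$, giving a single power of $\frac{t^{\ell'}}{1-t^{\ell'}}$ with exponent equal to the number of variables divided by $\ell'$. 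Under $\Dih_{\ell'}$, the $B$-type variables of \eqref{eq:sets_A_B} are moved freely by the reflections and so are expected to form orbits of length $2\ell'$, contributing factors $\frac{t^{2\ell'}}{1-t^{2\ell'}}$, while the $A$-type variables lie on reflection axes and contribute factors $\frac{t^{\ell'}}{1-t^{\ell'}}$; these are the building blocks inside the two displayed formulas.

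I would then pass from ``fixed'' to ``exactly''. By Lemma~\ref{lem:iso_vx_va}, a monomial fixed by $\cC_\ell$ has stabilizer $\cC_{\ell''}$ or $\Dih_{\ell''}$ for some $\ell\mid\ell''$, and it contains a reflection in its stabilizer exactly when \eqref{eq:cond_reflect} holds. Running the M\"obius inversion of Lemma~\ref{lem:hil_cyc_ell} over the divisor lattice of $d/\ell$ on the $\Dih_{\ell'}$-fixed series isolates the monomials with stabilizer exactly $\Dih_\ell$, and division by $d/\ell$ produces the first displayed series. For the cyclic case, subtracting the $\Dih_{\ell'}$-fixed series from the $\cC_{\ell'}$-fixed series discards exactly the reflection-symmetric monomials (those satisfying \eqref{eq:cond_reflect}); feeding this difference into the same inclusion--exclusion over $m\mid\frac{d}{\ell}$ records the monomials whose stabilizer is cyclic of order exactly $\ell$, and division by $2d/\ell$ produces the second series.

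The step I expect to be the genuine obstacle is the orbit-length bookkeeping for the reflections: verifying that under $\Dih_{\ell'}$ the $A$-variables really do split into orbits of length $\ell'$ and the $B$-variables into orbits of length $2\ell'$, \emph{uniformly} along the chain $\Dih_\ell\le\Dih_{2\ell}\le\cdots\le\Dih_d$. Since $\tau$ sends the block $N_i$ to $N_{2-i}$, a given reflection in $\Dih_{\ell'}$ fixes the variable $x_{A_{i,j}}$ only when $\frac{d}{\ell'}\mid 2(i-1)$, so a naive count over the whole of $N$ mixes orbit lengths; making the $A/B$ dichotomy of \eqref{eq:sets_A_B} consistent for all $\ell'$ at once is best handled by passing to the fundamental domain $L_1$ of the cyclic action (as in the proof of Lemma~\ref{lem:iso_vx_va}) and analysing which elements of $L_1$ lie on the axis of the reflection that stabilizes $L_1$. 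Once the orbit lengths are pinned down on $L_1$ and inflated by the cyclic factor coming from $\cC_\ell$, the M\"obius inversion and the orbit-size normalization become routine, and the two formulas follow.
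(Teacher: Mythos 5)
Your proposal follows essentially the same route as the paper's proof: compute the Hilbert series of monomials fixed by each subgroup $\Dih_{\ell'}$ or $\cC_{\ell'}$ (with $A$-variables in orbits of length $\ell'$ and $B$-variables in orbits of length $2\ell'$, exactly the conditions \eqref{eq:cond_cyclic} and \eqref{eq:cond_reflect} of Lemma~\ref{lem:iso_vx_va}), pass to exact stabilizers by M\"obius inversion over the divisors of $d/\ell$, obtain the cyclic case by subtracting the reflection-fixed series and halving, and account for the prefactors $\ell/d$ and $\ell/(2d)$ via the subgroup indices in the induced-module decomposition. Your explicit justification of those prefactors and your flagged concern about the orbit-length bookkeeping for reflections are refinements of, not departures from, the paper's argument.
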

    \begin{proof}
        First, $(H)_{\vx_N^\va}\cong \Dih_d$ if and only if $a_{i,j}=a_{i',j}$ and $b_{i,j}=b_{i',j}=b_{i,2b+1-j}=b_{i',2b+1-j}$ for $1\leq i,i'\leq d$. Hence the Hilbert series of $\C\langle\vx_N^\va\mid (H)_{\vx_N^\va}\cong \Dih_d\rangle$ is $\left(\frac{t^d}{1-t^d}\right)^{a}\left(\frac{t^{2d}}{1-t^{2d}}\right)^{b}.$
        From~\eqref{lem:iso_vx_va}, $(H)_{\vx_N^\va}\cong \cC_\ell$ for some $\ell\mid d$ if and only if the exponents $a_{i,j}$'s and $b_{i,k}$'s satisfy \eqref{eq:cond_cyclic} and \eqref{eq:cond_reflect}. Note that if two divisors $\ell$ and $\ell'$ of $d$ satisfy $\ell\mid\ell'$ and the exponents $a_{i,j}$'s and $b_{i,k}$'s satisfy \eqref{eq:cond_cyclic} and \eqref{eq:cond_reflect} with respect to $\ell'$, then the exponents $a_{i,j}$'s and $b_{i,k}$'s satisfy \eqref{eq:cond_cyclic} and \eqref{eq:cond_reflect} with respect to $\ell$. Hence, from the inclusion-exclusion principle, the Hilbert series of $\C\langle\vx_N^\va\mid (H)_{\vx_N^\va}\cong \Dih_d\rangle$ is
        \begin{equation*}
            \frac{\ell}{d}\sum_{m\mid \frac{d}{\ell}}\mu(m)\left(\frac{t^{m\ell}}{1-t^{m\ell}}\right)^{\frac{ad}{m\ell}} \left(\frac{t^{2m\ell}}{1-t^{2m\ell}}\right)^{\frac{bd}{m\ell}}.
        \end{equation*}

        Secondly, the Hilbert series of $\C\langle\vx_N^\va\mid (H)_{\vx_N^\va}\cong \cC_d\rangle$ is $$\frac{1}{2}\left\{\left(\frac{t^d}{1-t^d}\right)^{a+2b} - \left(\frac{t^d}{1-t^d}\right)^{a}\left(\frac{t^{2d}}{1-t^{2d}}\right)^{b}\right\}$$ from the inclusion-exclusion principle. Consequently,  the Hilbert series of $\C\langle\vx_N^\va\mid (H)_{\vx_N^\va}\cong \cC_d\rangle$ is $$\frac{\ell}{2d}\sum_{m\mid\frac{d}{\ell}}\left\{ \left(\frac{t^{m\ell}}{1-t^{m\ell}}\right)^{\frac{a+2b}{m\ell}} - \left(\frac{t^{m\ell}}{1-t^{m\ell}}\right)^{\frac{ad}{m\ell}} \left(\frac{t^{2{m\ell}}}{1-t^{2{m\ell}}}\right)^{\frac{bd}{m\ell}}\right\}.$$
    \end{proof}

    Applying Lemmas~\ref{lem:hil_cyc_ell} and~\ref{lem:hil_dih_ell} to~\eqref{eq:decom2_equiv_coho}, we can conclude the following.
    \begin{proposition}\label{prop:hilbert_vxN}
        For a nested set $N\in\mathcal{N}(C_{n+1})$, if $(\Dih_{n+1})_N\cong\cC_d$, then the Hilbert series of $\C[x_I\mid I\in N]\vx_N$ is
        \begin{equation*}
            \sum_{\ell\mid d} \frac{\ell}{d} (\mathrm{Ind}_{\cC_\ell}^{\cC_d}1) \sum_{m\mid\frac{d}{\ell}}\mu(m)\left(\frac{t^{m\ell}}{1-t^{m\ell}}\right)^{\frac{|N|}{m\ell}},
        \end{equation*}
        if $(\Dih_{n+1})_N\cong\Dih_d$, then the Hilbert series of $\C[x_I\mid I\in N]\vx_N$ is
        \begin{equation*}
        \begin{split}
            &\sum_{\ell\mid d}\frac{\ell}{d} \sum_{m\mid\frac{d}{\ell}} \mu(m)\left[\left( \mathrm{Ind}_{\Dih_\ell}^{\Dih_d}1\right) \left(\frac{t^{m\ell}}{1-t^{m\ell}}\right)^{\frac{ad}{m\ell}} \left(\frac{t^{2m\ell}}{1-t^{2m\ell}}\right)^{\frac{bd}{m\ell}}\right.\\
            &\qquad\qquad\qquad\quad\left. +\frac{1}{2}\left(\mathrm{Ind}_{\cC_\ell}^{\Dih_d}1\right)\left\{ \left(\frac{t^{m\ell}}{1-t^{m\ell}}\right)^{\frac{(a+2b)d}{m\ell}} - \left(\frac{t^{m\ell}}{1-t^{m\ell}}\right)^{\frac{ad}{m\ell}}\left(\frac{t^{2m\ell}}{1-t^{2m\ell}}\right)^{\frac{bd}{m\ell}}\right\} \right].
        \end{split}
        \end{equation*}
    \end{proposition}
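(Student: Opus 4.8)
The plan is to read the stated Proposition as an identity of \emph{equivariant} (graded) Hilbert series, that is, as an identity in $R(H)[[t]]$, where $H:=(\Dih_{n+1})_N$ and the coefficient of $t^m$ records the degree-$m$ part of $\C[x_I\mid I\in N]\vx_N$ as an $H$-module. With this reading the result is an assembly of two ingredients that are already in hand: the isotropy-stratified decomposition displayed just before Lemma~\ref{lem:hil_cyc_ell}, and the numerical Hilbert series of its pieces computed in Lemmas~\ref{lem:hil_cyc_ell} and~\ref{lem:hil_dih_ell}. So the first thing I would record is that $H$ acts on the monomial basis $\{\vx_N^\va\}$ of $\C[x_I\mid I\in N]\vx_N$ by permutations that preserve degree, since $\phi\cdot\vx_N^\va=\vx_{\phi\cdot N}^\va$ leaves the total exponent $\sum_I a_I$ unchanged; and that by Lemma~\ref{lem:iso_vx_va} each monomial has isotropy a subgroup $H'\le H$.

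The substantive point, which is what licenses writing the series with induced-representation coefficients, is that the equivariant Hilbert series of each induced summand factors as a single class in $R(H)$ times a scalar power series. Fix $H'\le H$ and let $S_{H'}$ be the set of monomials with $(H)_{\vx_N^\va}\cong H'$. Then $H$ permutes $S_{H'}$ in free $H/H'$-orbits, so for every $m$ the degree-$m$ part of $\C\langle S_{H'}\rangle$ is a permutation module equal to a direct sum of copies of $\C[H/H']=\mathrm{Ind}_{H'}^H 1$, one copy per degree-$m$ orbit. Consequently the equivariant Hilbert series of $\C\langle S_{H'}\rangle$ is $(\mathrm{Ind}_{H'}^H 1)$ times the numerical Hilbert series of a set of orbit representatives (on which $H'$ acts trivially), and the latter is precisely the expression supplied by the relevant lemma; the prefactors $\tfrac{\ell}{d}$ and $\tfrac{\ell}{2d}$ there already implement the passage from the full stratum to orbit representatives.

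The remaining step is bookkeeping against the two specializations of the decomposition. In the cyclic case $H\cong\cC_d$ the relevant subgroups are the $\cC_\ell$ with $\ell\mid d$; substituting Lemma~\ref{lem:hil_cyc_ell} for the orbit-representative series of the stratum with isotropy $\cC_\ell$, multiplying by $\mathrm{Ind}_{\cC_\ell}^{\cC_d}1$, and summing over $\ell\mid d$ yields the first displayed formula. In the dihedral case $H\cong\Dih_d$ each $\ell\mid d$ contributes two strata, of isotropy $\Dih_\ell$ and $\cC_\ell$; feeding the two parts of Lemma~\ref{lem:hil_dih_ell} into $\mathrm{Ind}_{\Dih_\ell}^{\Dih_d}1$ and $\mathrm{Ind}_{\cC_\ell}^{\Dih_d}1$ respectively and summing over $\ell\mid d$ produces the second formula. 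Applying this summand-by-summand to the decomposition of $\C[x_I\mid I\in N]\vx_N$, and reading the outcome inside $H_T^\ast(M_{C_{n+1}};\C)$ via~\eqref{eq:decom2_equiv_coho}, finishes the argument.

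I expect the only genuine obstacle to be the factorization claimed in the second step: one must check that forming $H$-isotypic pieces commutes with the internal grading, so that the equivariant Hilbert series of $\mathrm{Ind}_{H'}^H\C\langle S_{H'}\rangle$ really is the induced class times a scalar series, rather than a more intricate graded virtual character. This is clean precisely because $H$ acts by degree-preserving permutations of the monomial basis, so each graded piece is an honest permutation module; in particular the signs coming from the Möbius function $\mu$ live entirely inside the scalar orbit-representative series provided by the lemmas and never interact with the induction. Everything beyond this is routine substitution.
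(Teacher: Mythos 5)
Your proposal is correct and follows essentially the same route as the paper, which proves the proposition simply by feeding Lemmas~\ref{lem:hil_cyc_ell} and~\ref{lem:hil_dih_ell} into the isotropy-stratified decomposition $\C[x_I\mid I\in N]\vx_N=\bigoplus_{H'<H}\mathrm{Ind}_{H'}^{H}\C\langle\vx_N^\va\mid (H)_{\vx_N^\va}\cong H'\rangle$ displayed just before those lemmas. Your additional verification that each graded piece of a stratum is a permutation module on free $H/H'$-orbits, so that its equivariant Hilbert series is $(\mathrm{Ind}_{H'}^{H}1)$ times the orbit-representative series already encoded by the prefactors $\tfrac{\ell}{d}$ and $\tfrac{\ell}{2d}$, is exactly the point the paper leaves implicit.
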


\section{Dihedral group representations on $H_T^\ast(M_{C_{n+1}};\C)$}\label{sec:action_equiv_coho}

    In this section, we deduce explicit formulas for the dihedral group representation on $H_T^\ast(M_{C_{n+1}})$.

    Note that from Lemmas~\ref{lem:iso_nested_set} and~\ref{lem:iso_vx_va}, we have
    \begin{equation}\label{eq:decom1_equiv_coho}
    \begin{split}
        H_T^\ast(M_{C_{n+1}};\C)&= \bigoplus_{N\in{\mathcal{N}}(C_{n+1})}\bigoplus_{\va\in(\Z_{>0})^{N}}\C\langle \prod_{I\in N}x_I^{a_I}\rangle,\\
        &=\bigoplus_{H<\Dih_{n+1}}\bigoplus_{N\C[x_I\mid I\in N]\vx_N\atop (\Dih_{n+1})_N\cong H} \mathrm{Ind}_H^{\Dih_{n+1}} \C[x_I\mid I\in N]\vx_N,
    \end{split}
    \end{equation} and we know the representation of $(\Dih_{n+1})_N$ on the subring $\C[x_I\mid I\in N]\vx_N$. Now we are ready to deduce the representation of $\Dih_{n+1}$ on $H_T^\ast(M_{C_{n+1}})$.

    Recall that we define the numbers
    \begin{equation*}
        \begin{split}
            \alpha_{n+1}(d,k)&:=\left|\{N\in \mathcal{N}_k(C_{n+1})\mid (\Dih_{n+1})_N\cong\cC_d\}\right|,\\
            \beta_{n+1}(d,k)&:=\left|\{N\in \mathcal{N}_k(C_{n+1})\mid (\Dih_{n+1})_N\cong\Dih_d\}\right|,\text{ and}\\
            \gamma_{n+1}(d,k)&:=\alpha_{n+1}(d,k)+\beta_{n+1}(d,k).
        \end{split}
    \end{equation*}
    By using the definition of the sets $A_{i,j}$ and $B_{i,j}$ defined in~\eqref{eq:sets_A_B}, we also define
    \begin{equation*}
            \beta_{n+1}(d,k,a):=\left|\{N\in\mathcal{N}_k(C_{n+1})\mid (\Dih_{n+1})_N\cong\Dih_d,\,|A_{i,j}|=a, \text{ and }|B_{i,j}|=k-a\}\right|.
    \end{equation*}
    \begin{theorem}
        The representation of $\Dih_{n+1}$ on $H_T^\ast(M_{C_{n+1}};\C)$ is
        \begin{equation*}
        \begin{split}
        &1+\sum_{k=1}^n\sum_{d\mid n+1\atop d\mid k}\gamma_{n+1}(d,k)\sum_{\ell\mid d}(\mathrm{Ind}_{\cC_\ell}^{\Dih_{n+1}}1)\frac{\ell}{n+1}\sum_{m\mid\frac{d}{\ell}}\mu(m)\left(\frac{t^{m\ell}}{1-t^{m\ell}}\right)^{\frac{k}{m\ell}}\\
        &+\sum_{k=1}^n\sum_{d\mid n+1\atop d\mid k}\sum_{a=1}^k\beta_{n+1}(d,k,a)\sum_{\ell\mid d}(\mathrm{Ind}_{\Dih_\ell}^{\Dih_{n+1}}1)\frac{\ell}{2(n+1)} \sum_{m\mid\frac{d}{\ell}}\mu(m)\left(\frac{t^{m\ell}}{1-t^{m\ell}}\right)^{\frac{ad}{m\ell}}\left(\frac{t^{2m\ell}}{1-t^{2m\ell}}\right)^{\frac{bd}{m\ell}},
        \end{split}
        \end{equation*}where $\mu$ is the classical M\"{o}bius function in number theory.
    \end{theorem}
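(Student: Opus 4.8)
The plan is to build the representation from the orbit decomposition of $H_T^\ast(M_{C_{n+1}};\C)$ as a graded $\Dih_{n+1}$-module and to feed in the per-subring characters of Proposition~\ref{prop:hilbert_vxN}. By \eqref{eq:decom2_equiv_coho} the cohomology is the direct sum of the subrings $\C[x_I\mid I\in N]\vx_N$ over all $N\in\mathcal{N}(C_{n+1})$, and the rule $\phi\cdot\vx_N^\va=\vx_{\phi\cdot N}^\va$ shows that $\Dih_{n+1}$ permutes these subrings exactly as it permutes nested sets. First I would collect the summands into $\Dih_{n+1}$-orbits of nested sets: the direct sum over a single orbit is, as a graded $\Dih_{n+1}$-representation, the induced module $\mathrm{Ind}_{(\Dih_{n+1})_N}^{\Dih_{n+1}}\C[x_I\mid I\in N]\vx_N$ from any representative $N$, which is the content of \eqref{eq:decom1_equiv_coho}. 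The empty nested set is its own orbit with stabilizer all of $\Dih_{n+1}$ and subring $\C$, producing the leading summand $1$.

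Next, for a representative $N$ with $(\Dih_{n+1})_N\cong\cC_d$ or $\Dih_d$, I would insert the graded character of $\C[x_I\mid I\in N]\vx_N$ given by Proposition~\ref{prop:hilbert_vxN}. That character is itself written through the inductions $\mathrm{Ind}_{\cC_\ell}^{\cC_d}1$, $\mathrm{Ind}_{\Dih_\ell}^{\Dih_d}1$ and $\mathrm{Ind}_{\cC_\ell}^{\Dih_d}1$ coming from the pointwise stabilizers $(\Dih_{n+1})_{\vx_N^\va}$ classified in Lemma~\ref{lem:iso_vx_va}. The transitivity of induction, $\mathrm{Ind}_{(\Dih_{n+1})_N}^{\Dih_{n+1}}\circ\mathrm{Ind}_{H'}^{(\Dih_{n+1})_N}=\mathrm{Ind}_{H'}^{\Dih_{n+1}}$, then collapses these into the inductions $\mathrm{Ind}_{\cC_\ell}^{\Dih_{n+1}}1$ and $\mathrm{Ind}_{\Dih_\ell}^{\Dih_{n+1}}1$ from subgroups of $\Dih_{n+1}$ that appear in the statement, with the same M\"obius-weighted power series in $t$ attached.

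It then remains to count orbits and merge coefficients. By orbit--stabilizer, a cardinality-$k$ nested set with stabilizer $\cC_d$ lies in an orbit of size $\tfrac{2(n+1)}{d}$ and one with stabilizer $\Dih_d$ in an orbit of size $\tfrac{n+1}{d}$; since $\alpha_{n+1}(d,k)$ and the refined count $\beta_{n+1}(d,k,a)$ enumerate individual nested sets, dividing by these orbit sizes turns them into numbers of orbits, which is where the rational prefactors in front of the inductions originate. The decisive bookkeeping observation is that $\mathrm{Ind}_{\cC_\ell}^{\Dih_{n+1}}1$ is produced from two independent sources, namely the cyclic orbits (stabilizer $\cC_d$) and the cyclic part of the dihedral orbits (the monomials of $\Dih_d$-type subrings whose pointwise stabilizer is only cyclic), so their coefficients are added; because the cyclic contribution depends on $N$ only through $k=|N|$, one may sum the refined count over $a$, replacing $\beta_{n+1}(d,k,a)$ by $\beta_{n+1}(d,k)$, and then $\alpha_{n+1}(d,k)+\beta_{n+1}(d,k)=\gamma_{n+1}(d,k)$ yields the $\gamma$ in the first line. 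The inductions $\mathrm{Ind}_{\Dih_\ell}^{\Dih_{n+1}}1$, on the other hand, come only from the dihedral part of the $\Dih_d$-orbits, whose exponents $\tfrac{ad}{m\ell}$ and $\tfrac{bd}{m\ell}$ record the number $a$ of reflection-fixed and $2b=k/d-a$ reflection-swapped elements in the fundamental domain, so the refined count $\beta_{n+1}(d,k,a)$ must be kept, giving the second line.

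The main obstacle I expect is purely the constant- and term-tracking: one must thread the factors $\tfrac{\ell}{d}$ and the $\tfrac12$ that separates cyclic from dihedral pointwise stabilizers (Lemmas~\ref{lem:hil_cyc_ell} and~\ref{lem:hil_dih_ell}) through the two orbit sizes $\tfrac{2(n+1)}{d}$ and $\tfrac{n+1}{d}$, and verify that the mixed term $\bigl(\tfrac{t^{m\ell}}{1-t^{m\ell}}\bigr)^{ad/m\ell}\bigl(\tfrac{t^{2m\ell}}{1-t^{2m\ell}}\bigr)^{bd/m\ell}$, which occurs with opposite signs in the dihedral and cyclic parts of Proposition~\ref{prop:hilbert_vxN}, combines correctly so that precisely the displayed prefactors $\tfrac{\ell}{n+1}$ and $\tfrac{\ell}{2(n+1)}$ survive. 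Once these constants are reconciled, the identity is a direct substitution followed by transitivity of induction and the orbit count.
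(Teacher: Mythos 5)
Your outline is the same as the paper's: split $H_T^\ast(M_{C_{n+1}};\C)$ into the subrings $\C[x_I\mid I\in N]\vx_N$, gather them into $\Dih_{n+1}$-orbits so each orbit contributes $\mathrm{Ind}_{(\Dih_{n+1})_N}^{\Dih_{n+1}}\C[x_I\mid I\in N]\vx_N$ for an orbit representative $N$, substitute Proposition~\ref{prop:hilbert_vxN}, and finish by transitivity of induction; the paper's proof is exactly this, compressed into two sentences. The genuine gap is the step you explicitly postpone --- the ``constant- and term-tracking'' --- which does not resolve the way you assert. Running your own orbit count: a size-$k$ nested set with stabilizer $\cC_d$ contributes, per unit of $\alpha_{n+1}(d,k)$, the factor $\frac{d}{2(n+1)}\cdot\frac{\ell}{d}=\frac{\ell}{2(n+1)}$ in front of $(\mathrm{Ind}_{\cC_\ell}^{\Dih_{n+1}}1)\mu(m)\bigl(\tfrac{t^{m\ell}}{1-t^{m\ell}}\bigr)^{k/m\ell}$, and the cyclic (``pure'') part of a $\Dih_d$-orbit contributes $\frac{d}{n+1}\cdot\frac{\ell}{d}\cdot\frac12=\frac{\ell}{2(n+1)}$ per unit of $\beta_{n+1}(d,k,a)$; merging these gives $\gamma_{n+1}(d,k)\frac{\ell}{2(n+1)}$, not the displayed $\gamma_{n+1}(d,k)\frac{\ell}{n+1}$. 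Likewise the mixed term $\bigl(\tfrac{t^{m\ell}}{1-t^{m\ell}}\bigr)^{ad/m\ell}\bigl(\tfrac{t^{2m\ell}}{1-t^{2m\ell}}\bigr)^{bd/m\ell}$ does not collapse onto a single multiple of $\mathrm{Ind}_{\Dih_\ell}^{\Dih_{n+1}}1$: what survives is $\frac{\ell}{n+1}\,\mathrm{Ind}_{\Dih_\ell}^{\Dih_{n+1}}1-\frac{\ell}{2(n+1)}\,\mathrm{Ind}_{\cC_\ell}^{\Dih_{n+1}}1$, and since $\mathrm{Ind}_{\cC_\ell}^{\Dih_{n+1}}1=\mathrm{Ind}_{\Dih_\ell}^{\Dih_{n+1}}(1+\epsilon)$ with $\epsilon$ the nontrivial character of $\Dih_\ell$ killed on $\cC_\ell$, this equals $\frac{\ell}{2(n+1)}\,\mathrm{Ind}_{\Dih_\ell}^{\Dih_{n+1}}(1-\epsilon)$ --- a different character from the one in the statement. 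Your assertion that the cyclic contribution from dihedral orbits ``depends on $N$ only through $k$'' is true only of the pure part; the negative mixed part depends on $a$ and must be carried into the second line.

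A dimension check exposes the problem without any representation theory: substituting $[G:H]$ for $\mathrm{Ind}_H^{G}1$ and using $\sum_{m\ell=j}\mu(m)=0$ for $j>1$ together with $\sum_{d}\gamma_{n+1}(d,k)=f_k(Cy^n)$, the first displayed line alone already evaluates to $2\sum_k f_k(Cy^n)\bigl(\tfrac{t}{1-t}\bigr)^k$, twice the Stanley--Reisner Hilbert series, and the second line adds a further positive quantity; concretely, for $C_4$ in degree $2$ the displayed formula returns dimension $24+6=30$ where $f_1=12$ is required, whereas the worked $C_4$ example at the end of the paper is consistent with the corrected prefactors $\frac{\ell}{2(n+1)}$ and the combination $\mathrm{Ind}_{\Dih_\ell}^{\Dih_{n+1}}(1-\epsilon)$. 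So the decomposition, the orbit sizes, and the use of transitivity in your proposal are all right, but the claim that ``precisely the displayed prefactors survive'' is the whole content of the proof and it is false as stated; you would need either to correct the constants and characters in the target identity or to exhibit an orbit count different from the (correct) one you used.
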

    \begin{proof}
        We can rewrite~\eqref{eq:decom1_equiv_coho} as follows.
        \begin{equation*}
          \begin{split}
            H_T^\ast(M_{C_{n+1}};\C)&=\left(\bigoplus_{H<\Dih_{n+1}}\bigoplus_{N\in{\mathcal{N}}(C_{n+1}) \atop (\Dih_{n+1})_N\cong H}\bigoplus_{H'<H}\mathrm{Ind}_{H'}^{\Dih_{n+1}}\C\langle\vx_N^\va\mid (H)_{\vx_N^\va}\cong H'\rangle\right).
          \end{split}
        \end{equation*}
        Note that if $N=\emptyset$, then $\vx_N=1$, $(\Dih_{n+1})_N=\Dih_{n+1}$, and $\C[x_I\mid I\in N]\vx_N=\C$. Since $\mathrm{Ind}_{H}^{\Dih_{n+1}}\left(\mathrm{Ind}_{H'}^{H}1\right)=\mathrm{Ind}_{H'}^{H}1$, the theorem follows from the above by applying Proposition~\ref{prop:hilbert_vxN}.
    \end{proof}

    Considering the actions of the cyclic group $\cC_{n+1}$ on $\mathcal{N}(C_{n+1})$ and $H_T^\ast(M_{C_{n+1}};\C)$, we get the following representation.
    \begin{corollary}
        The representation of $\cC_{n+1}$ on $H_T^\ast(M_{C_{n+1}};\C)$ is
        \begin{equation*}
        \begin{split}
        &1+\sum_{k=1}^n\sum_{d\mid n+1\atop d\mid k}\gamma_{n+1}(d,k)\sum_{\ell\mid d}(\mathrm{Ind}_{\cC_\ell}^{\cC_{n+1}}1)\frac{\ell}{n+1}\sum_{m\mid\frac{d}{\ell}}\mu(m)\left(\frac{t^{m\ell}}{1-t^{m\ell}}\right)^{\frac{k}{m\ell}},
        \end{split}
        \end{equation*}where $\mu$ is the classical M\"{o}bius function in number theory.
    \end{corollary}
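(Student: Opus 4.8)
The plan is to restrict the action of $\Dih_{n+1}$ to its rotation subgroup $\cC_{n+1}=\langle\sigma_1\rangle$ and to run the same orbit decomposition as in \eqref{eq:decom1_equiv_coho}, with $\cC_{n+1}$ replacing $\Dih_{n+1}$. Because every subgroup of $\cC_{n+1}$ is cyclic, the inductions that appear will all be of the form $\mathrm{Ind}_{\cC_\ell}^{\cC_{n+1}}$, and only Lemma~\ref{lem:hil_cyc_ell} (not Lemma~\ref{lem:hil_dih_ell}) will be needed.

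The first step is to determine the $\cC_{n+1}$-isotropy of a nested set $N$, which is $(\cC_{n+1})_N=(\Dih_{n+1})_N\cap\cC_{n+1}$. By Lemma~\ref{lem:iso_nested_set}, if $(\Dih_{n+1})_N\cong\cC_d$ then $(\Dih_{n+1})_N=\langle\sigma_N\rangle$ already lies in $\cC_{n+1}$, so $(\cC_{n+1})_N\cong\cC_d$; and if $(\Dih_{n+1})_N\cong\Dih_d$ then its rotation part is again $\langle\sigma_N\rangle$, so $(\cC_{n+1})_N\cong\cC_d$ with the same $d$. Thus the two dihedral types merge under restriction, and the number of $N\in\mathcal{N}_k(C_{n+1})$ with $(\cC_{n+1})_N\cong\cC_d$ is exactly $\alpha_{n+1}(d,k)+\beta_{n+1}(d,k)=\gamma_{n+1}(d,k)$.

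Next I would decompose, for such an $N$, the $\cC_d$-representation on $\C[x_I\mid I\in N]\vx_N$ by the $\cC_d$-isotropy of the monomials $\vx_N^\va$. Since the $\cC_{n+1}$-action involves no reflections, the only constraint on the exponents is the cyclic condition \eqref{eq:cond_cyclic}; in particular it is irrelevant whether the full isotropy of $N$ was $\cC_d$ or $\Dih_d$, and one may treat $N$ as $k=|N|$ elements split evenly into $d$ rotation-blocks. Lemma~\ref{lem:hil_cyc_ell} then gives, uniformly in the two cases, that $\C\langle\vx_N^\va\mid(\cC_d)_{\vx_N^\va}\cong\cC_\ell\rangle$ has Hilbert series $\frac{\ell}{d}\sum_{m\mid\frac{d}{\ell}}\mu(m)\bigl(\frac{t^{m\ell}}{1-t^{m\ell}}\bigr)^{\frac{k}{m\ell}}$, so that
\[\C[x_I\mid I\in N]\vx_N=\bigoplus_{\ell\mid d}\mathrm{Ind}_{\cC_\ell}^{\cC_d}\C\langle\vx_N^\va\mid(\cC_d)_{\vx_N^\va}\cong\cC_\ell\rangle.\]

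Finally I would assemble the orbit sum. The $\cC_{n+1}$-orbit of $N$ has size $(n+1)/d$, so the $\gamma_{n+1}(d,k)$ nested sets with isotropy $\cC_d$ form $\frac{d}{n+1}\gamma_{n+1}(d,k)$ orbits, each contributing $\mathrm{Ind}_{\cC_d}^{\cC_{n+1}}\C[x_I\mid I\in N]\vx_N$; crucially, by the previous paragraph this contribution depends only on $d$ and $k$, so no refinement by the parameter $a$ (unlike $\beta_{n+1}(d,k,a)$ in the theorem) is required. Using transitivity $\mathrm{Ind}_{\cC_d}^{\cC_{n+1}}\mathrm{Ind}_{\cC_\ell}^{\cC_d}=\mathrm{Ind}_{\cC_\ell}^{\cC_{n+1}}$ and collecting the scalars $\frac{d}{n+1}\cdot\frac{\ell}{d}=\frac{\ell}{n+1}$, the cardinality-$k$, isotropy-$\cC_d$ part of the representation becomes $\gamma_{n+1}(d,k)\sum_{\ell\mid d}(\mathrm{Ind}_{\cC_\ell}^{\cC_{n+1}}1)\frac{\ell}{n+1}\sum_{m\mid\frac{d}{\ell}}\mu(m)\bigl(\frac{t^{m\ell}}{1-t^{m\ell}}\bigr)^{\frac{k}{m\ell}}$. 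Summing over $1\le k\le n$ and over common divisors $d$ of $n+1$ and $k$, and adding the summand $1$ from $N=\emptyset$, gives the stated formula. The one place deserving care is the merging in the second paragraph: that both $\cC_d$- and $\Dih_d$-type nested sets contribute to the single cyclic count $\gamma_{n+1}(d,k)$, and that Lemma~\ref{lem:hil_cyc_ell} applies to both without reference to the reflection structure. Everything else is the same bookkeeping as in the theorem, with the $\Dih$-induction terms simply absent.
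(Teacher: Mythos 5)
Your proposal is correct and follows essentially the route the paper intends: the paper derives the corollary by simply restricting the theorem's orbit decomposition to the rotation subgroup, and your argument supplies exactly the needed observations — that both $\cC_d$- and $\Dih_d$-type nested sets have cyclic isotropy $\cC_d$ under $\cC_{n+1}$ (hence the single count $\gamma_{n+1}(d,k)$), and that Lemma~\ref{lem:hil_cyc_ell} applies uniformly since only the cyclic condition~\eqref{eq:cond_cyclic} constrains the exponents. The bookkeeping with orbit sizes and transitivity of induction matches the stated formula.
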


\section{Relationship with annular non-crossing matchings}\label{sec:annular_noncrossing}

    In this section, we construct annular non-crossing matchings and then find a relationship with nested sets. We also discuss the relationship between the number of annular non-crossing matchings and the number $\beta_{n+1}(d,k)$.

    Note that if $n+1$ is even, then there are two kinds of reflections; one fixes two vertices of $C_{n+1}$ and the other has no fixed vertices. If $n+1$ is odd, then every reflection fixes exactly one vertex. Now we define annular non-crossing matchings related to a nested set $N\in\mathcal{N}(C_{n+1})$ which can be fixed under some reflection in $\Dih_{n+1}$.\footnote{In fact, our annular non-crossing matchings are circular non-crossing matchings, non-crossing matchings of curves embedded within a disk (\cite{GT1990}), but for convenience of explanation we use the idea of the annular non-crossing matchings in~\cite{DP2016}, and they are slightly different from the original definition; we add more conditions.}

    Before we construct annular non-crossing matchings, we first consider an arrangements of beads on a disjoint union of arcs on a unit cycle.

    \medskip

    \noindent{\textbf{Arrangements of beads on a disjoint union of intervals.}} Let $\Gamma_1,\ldots,\Gamma_\ell$ be pairwise disjoint arcs on a unit circle arranged in counterclockwise. We put $k$ beads with colors blue and white on  $\Gamma_1\cup\cdots\cup\Gamma_\ell$ in the following rules.
    \begin{enumerate}
        \item[(R1)] Put a blue (respectively, white) bead $B_1$ on $\Gamma_{i_1}$.
        \item[(R2)] We put a bead $B_2$ on $\Gamma_{i_2}$, depending on its color.
            \begin{enumerate}
                \item[(R2-1)] If both $B_1$ and $B_2$ are blue (respectively, white), then  $i_2<i_1$ (respectively, $i_2>i_1$), and we do not put on the intervals $\Gamma_j$ for $j\geq i_1$ (respectively, $j\leq i_1$) any more.
                \item[(R2-2)] If $B_1$ and $B_2$ have the different colors, that is, $B_2$ is white (respectively, blue), then $i_2>i_1$ (respectively, $i_2<i_1$),
                    and then we choose one of the union of intervals $\Gamma_{i_1}\cup\cdots \cup\Gamma_{i_2-1}$ or $\Gamma_{i_1+1}\cup\cdots\cup\Gamma_{i_2}$ in order not to put any bead on it.
            \end{enumerate}
            \item[(R3)] We put a bead $B_{i_3}$ on some possible interval by comparing with the color of $B_{i_2}$ in the same rule (R2), and continue in this fashion until we arrange $k$ beads.
    \end{enumerate}

    \bigskip

    Consider the annulus $\{(x,y)\in\R^2\mid \frac{1}{4}\leq x^2+y^2\leq 1\}$ with dots $v_{n+1,i}$ on the outer circle, $1\leq i\leq n+1$. Let $\Gamma_i$ be the open arc between $v_{n+1,i}$ and $v_{n+1,i+1}$ for $i=1,\ldots,\lfloor\frac{n+1}{2}\rfloor$.

    \bigskip

    \noindent{\textbf{Construction of annular non-crossing matchings of type 1.}} We put $k$ beads with colors blue and white on the arcs $\Gamma_1\cup\cdots\cup\Gamma_{\lfloor\frac{n+1}{2}\rfloor}$ under the rules above.

    By matching the beads in the following five steps, we obtain an annular non-crossing matching, see Figure~\ref{fig:ann_matching}.
    \begin{enumerate}
        \item[(S1)] Identify all blue bead that lie directly right of a white bead.
        \item[(S2)] Repeatedly apply the previous step.
        \item[(S3)] If there is a blue bead which not lying right of a white bead, draw a line to the arc $\{(x,0)\mid -1\leq x\leq-\frac{1}{4}\}$.
        \item[(S4)] If there is a white bead which not connected to a blue bead, draw a line to the arc $\{(x,0)\mid \frac{1}{4}\leq x\leq 1\}$.
        \item[(S5)] Reflect along the $x$-axis.
    \end{enumerate}

    \bigskip

    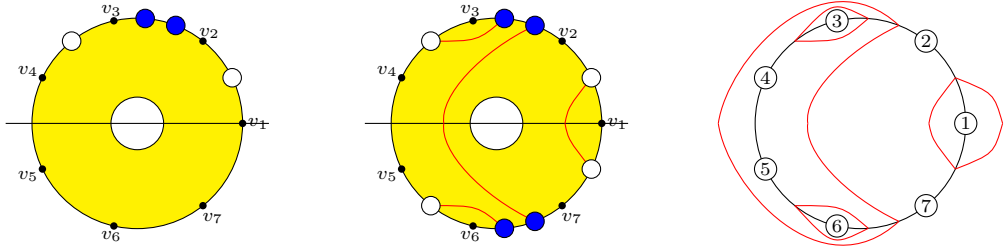
\begin{figure}[h]
    \centering
        \begin{subfigure}{.3\textwidth}
        \centering
            \begin{tikzpicture}[scale=.7]
                \filldraw[fill=yellow] (0,0) circle(2cm);
                \filldraw[fill=white] (0,0) circle(.5cm);
                \draw (-2.5,0)--(2.5,0);
                \fill (2,0) circle(2pt);
                \fill (360/7:2cm) circle(2pt);
                \fill (2*360/7:2cm) circle(2pt);
                \fill (3*360/7:2cm) circle(2pt);
                \fill (4*360/7:2cm) circle(2pt);
                \fill (5*360/7:2cm) circle(2pt);
                \fill (6*360/7:2cm) circle(2pt);
                \draw (2.3,0) node{\tiny$v_1$};
                \draw (360/7:2.2cm) node{\tiny$v_2$};
                \draw (2*360/7:2.2cm) node{\tiny$v_3$};
                \draw (3*360/7:2.3cm) node{\tiny$v_4$};
                \draw (4*360/7:2.3cm) node{\tiny$v_5$};
                \draw (5*360/7:2.2cm) node{\tiny$v_6$};
                \draw (6*360/7:2.3cm) node{\tiny$v_7$};
                \filldraw[fill=white] (360/14:2cm) circle (5pt);
                \filldraw[fill=white] (5*360/14:2cm) circle (5pt);
                \filldraw[fill=blue] (4*360/21:2cm) circle (5pt);
                \filldraw[fill=blue] (5*360/21:2cm) circle (5pt);
            \end{tikzpicture}
        \end{subfigure}
        \begin{subfigure}{.3\textwidth}
        \centering
            \begin{tikzpicture}[scale=.7]
                \filldraw[fill=yellow] (0,0) circle(2cm);
                \filldraw[fill=white] (0,0) circle(.5cm);
                \draw (-2.5,0)--(2.5,0);
                \fill (2,0) circle(2pt);
                \fill (360/7:2cm) circle(2pt);
                \fill (2*360/7:2cm) circle(2pt);
                \fill (3*360/7:2cm) circle(2pt);
                \fill (4*360/7:2cm) circle(2pt);
                \fill (5*360/7:2cm) circle(2pt);
                \fill (6*360/7:2cm) circle(2pt);
                \draw (2.3,0) node{\tiny$v_1$};
                \draw (360/7:2.2cm) node{\tiny$v_2$};
                \draw (2*360/7:2.2cm) node{\tiny$v_3$};
                \draw (3*360/7:2.3cm) node{\tiny$v_4$};
                \draw (4*360/7:2.3cm) node{\tiny$v_5$};
                \draw (5*360/7:2.2cm) node{\tiny$v_6$};
                \draw (6*360/7:2.3cm) node{\tiny$v_7$};
                \draw[red] (5*360/21:2cm)..controls (2*360/7:1.6cm)..(5*360/14:2cm);
                \draw[red] (4*360/21:2cm)..controls (2*360/7:1.4cm) and (3*360/7:1.2)..(-1,0);
                \draw[red] (360/14:2cm)..controls (360/28:1.4cm)..(1.3,0);
                \draw[red] (-360/14:2cm)..controls (-360/28:1.4cm)..(1.3,0);
                \draw[red] (-5*360/21:2cm)..controls (-2*360/7:1.6cm)..(-5*360/14:2cm);
                \draw[red] (-4*360/21:2cm)..controls (-2*360/7:1.4cm) and (-3*360/7:1.2)..(-1,0);
                \filldraw[fill=white] (360/14:2cm) circle (5pt);
                \filldraw[fill=white] (5*360/14:2cm) circle (5pt);
                \filldraw[fill=blue] (4*360/21:2cm) circle (5pt);
                \filldraw[fill=blue] (5*360/21:2cm) circle (5pt);
                \filldraw[fill=white] (-360/14:2cm) circle (5pt);
                \filldraw[fill=white] (-5*360/14:2cm) circle (5pt);
                \filldraw[fill=blue] (-4*360/21:2cm) circle (5pt);
                \filldraw[fill=blue] (-5*360/21:2cm) circle (5pt);
            \end{tikzpicture}
        \end{subfigure}
        \begin{subfigure}{.3\textwidth}
        \centering
            \begin{tikzpicture}[scale=.7]
                \draw (0,0) circle(2cm);
                \foreach \x in {1,2,...,7}
                    {
                    \filldraw[fill=white] (\x*360/7:2cm) circle(6pt);
                    \draw (\x*360/7-360/7:2cm) node {\tiny$\x$};
                    }
                \draw (0:2cm) node {\tiny$1$};
                \draw[red] (5*360/21:2cm)..controls (2*360/7:1.6cm)..(5*360/14:2cm);
                \draw[red] (4*360/21:2cm)..controls (2*360/7:1.4cm) and (3*360/7:1.2)..(-1,0);
                \draw[red] (360/14:2cm)..controls (360/28:1.4cm)..(1.3,0);
                \draw[red] (-360/14:2cm)..controls (-360/28:1.4cm)..(1.3,0);
                \draw[red] (-5*360/21:2cm)..controls (-2*360/7:1.6cm)..(-5*360/14:2cm);
                \draw[red] (-4*360/21:2cm)..controls (-2*360/7:1.4cm) and (-3*360/7:1.2)..(-1,0);
                \draw[red] (5*360/21:2cm)..controls (2*360/7:2.4cm)..(5*360/14:2cm);
                \draw[red] (4*360/21:2cm)..controls (2*360/7:3.3cm) and (3*360/7:2.8)..(-2.7,0);
                \draw[red] (360/14:2cm)..controls (360/28:2.6cm)..(2.7,0);
                \draw[red] (-360/14:2cm)..controls (-360/28:2.6cm)..(2.7,0);
                \draw[red] (-5*360/21:2cm)..controls (-2*360/7:2.4cm)..(-5*360/14:2cm);
                \draw[red] (-4*360/21:2cm)..controls (-2*360/7:3.3cm) and (-3*360/7:2.8)..(-2.7,0);
            \end{tikzpicture}
        \end{subfigure}
        \caption{The annular non-crossing matching of type 1 corresponding to the nested set $\{\{1\},\{3\},\{6\},\{3,4,5,6\}\}\in\mathcal{N}_4(C_7)$}\label{fig:ann_matching}
    \end{figure}

    Let $\mathrm{Ann}_{n+1}(k)$ be the set of all annular non-crossing matchings of type~1. It follows from the construction that for even $n$ we have $
        |\mathrm{Ann}_{n+1}(k)|=|\mathrm{Ann}_{n}(k)|.$
    \begin{proposition}\label{prop:odd_annular}
        Let $n+1$ be odd. For each integer $1\leq k\leq n$, the sum of beta numbers, $\sum_{d\mid k}\beta_{n+1}(d,k)$, is equal to
        \begin{equation*}
                (n+1)\times |\mathrm{Ann}_{n+1}(k)|.
        \end{equation*}
    \end{proposition}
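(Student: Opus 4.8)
The plan is to read both sides as counts of symmetry types and then realize the factor $n+1$ by an explicit bijection. First I would rephrase the left-hand side: by Lemma~\ref{lem:iso_nested_set} the isotropy group $(\Dih_{n+1})_N$ of a nested set $N\in\mathcal{N}_k(C_{n+1})$ is always isomorphic to $\cC_d$ or $\Dih_d$, and it is of the dihedral type $\Dih_d$ exactly when $N$ is fixed by at least one reflection. Hence $\sum_{d\mid k}\beta_{n+1}(d,k)$ is nothing but the number of nested sets in $\mathcal{N}_k(C_{n+1})$ that are invariant under some reflection of $\Dih_{n+1}$. The strategy is to count these reflection-invariant nested sets by fixing a single reflection and then sweeping that reflection over all possible axes.

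Since $n+1$ is odd, every reflection fixes exactly one vertex, so the $n+1$ reflections are in bijection with the $n+1$ vertices and form a single conjugacy class under the rotation subgroup $\langle\sigma_1\rangle$. Let $\tau_0$ be the reflection in the $x$-axis, which fixes the vertex $v_{n+1,1}$. The central step is to show that the construction of annular non-crossing matchings of type~1 furnishes a bijection between $\mathrm{Ann}_{n+1}(k)$ and the set $\{N\in\mathcal{N}_k(C_{n+1})\mid \tau_0\cdot N=N\}$. In one direction, a $\tau_0$-invariant nested set is determined by its ``upper half'': each block $I\in N$ is recorded by a bead placed on the arcs $\Gamma_1\cup\cdots\cup\Gamma_{\lfloor (n+1)/2\rfloor}$, the colour encoding whether $I$ lies to the left or right of the axis (and whether $\tau_0\cdot I=I$, so that $I$ is self-paired across the axis and the corresponding bead is joined to an axis-arc in (S3) or (S4)). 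I would check that the placement rules (R1)--(R3) are precisely the combinatorial shadow of the nesting and disjointness conditions (N1)--(N3). Conversely, steps (S1)--(S5) reconstruct the full $\tau_0$-symmetric block structure from a bead arrangement, and I would verify that the output again satisfies (N1)--(N3) and that the two procedures are mutually inverse; this identifies $|\mathrm{Ann}_{n+1}(k)|$ with $|\{N\mid\tau_0\cdot N=N\}|$.

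Finally I would assemble the factor $n+1$: because the $n+1$ reflections are conjugate by rotations, the count $|\{N\mid\tau\cdot N=N\}|$ is independent of the reflection $\tau$, and rotating the distinguished axis through all $n+1$ vertices sweeps out every reflection-invariant nested set, summing to $(n+1)\,|\mathrm{Ann}_{n+1}(k)|$. The hard part is exactly this last bookkeeping. A nested set whose isotropy group is $\Dih_d$ with $d>1$ is fixed by $d$ distinct reflections and carries a nontrivial rotational symmetry of order $d$, so a priori it could be seen from several of the $n+1$ axes; one must therefore show that the extra conditions built into the type-1 matchings (the restrictions in (R2-1)--(R2-2) forbidding further beads on certain arcs once the colour pattern is chosen) interact correctly with this rotational symmetry, so that the sweep over axes records each reflection-invariant nested set with exactly the right multiplicity and the naive $(n+1)$-fold count is neither inflated nor deficient. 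Verifying this compatibility between the bead rules and the order-$d$ rotational stabilizers is where the genuine content of the argument lies, and it is the oddness of $n+1$---each reflection pinned to a unique fixed vertex---that keeps the vertex-by-vertex sweep clean.
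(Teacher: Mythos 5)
You follow the same route as the paper's own proof: identify $\mathrm{Ann}_{n+1}(k)$ with the nested sets fixed by one distinguished reflection (the paper uses $\sigma_1\tau$, you use the $x$-axis reflection $\tau_0$), then exploit the fact that for odd $n+1$ the $n+1$ reflections form a single conjugacy class under rotations and sweep the axis around the circle to produce the factor $n+1$. To your credit, you explicitly isolate the one delicate point --- how a nested set with isotropy $\Dih_d$, $d>1$, which is fixed by $d$ distinct reflections, is counted by the sweep --- which the paper's two-sentence proof passes over in silence.

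But you then defer exactly that point, and it cannot be deferred: it is the entire content of the count, and in the form you set it up it does not close. If step~3 really gives a bijection between $\mathrm{Ann}_{n+1}(k)$ and the \emph{full} fixed-point set $\{N\in\mathcal{N}_k(C_{n+1})\mid \tau_0\cdot N=N\}$, then summing over the $n+1$ axes counts each $N$ with isotropy $\Dih_d$ exactly $d$ times, so the sweep proves $(n+1)\,|\mathrm{Ann}_{n+1}(k)|=\sum_{d\mid k}d\,\beta_{n+1}(d,k)$, which differs from the asserted unweighted sum whenever some $\beta_{n+1}(d,k)$ with $d>1$ is nonzero. This is not a hypothetical worry: for $n+1=9$, $k=3$, the reduction $\beta_{n+1}(d,k)=\beta_{(n+1)/d}(1,k/d)$ together with $\beta_{n+1}(1,1)=n(n+1)$ gives $\beta_9(3,3)=\beta_3(1,1)=6$, while counting pairs (reflection, nested set fixed by it) in two ways gives $9\,|\{N\mid\tau_0\cdot N=N\}|=\beta_9(1,3)+3\beta_9(3,3)$, so $\beta_9(1,3)\equiv 0\pmod 9$ and hence $\sum_{d\mid 3}\beta_9(d,3)\equiv 6\pmod 9$ is not even divisible by $9$. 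So to reach the statement as written you would have to show that your claimed bijection is in fact onto a \emph{proper} subset of the fixed-point set --- that the asymmetric bead rules (R2-1)--(R2-2) admit only one of the $d$ presentations of each rotationally symmetric fixed nested set --- or else the left-hand side must be replaced by the weighted sum. Either way, the multiplicity analysis you describe as ``the genuine content'' has to be carried out, and as your argument stands it points away from, not toward, the identity being proved.
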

    \begin{proof}
         For each annular non-crossing matching $\mathcal{M}\in\mathrm{Ann}(n+1,k)$, we can find a nested set $N_{\mathcal{M}}\in \mathcal{N}_k(C_{n+1})$ which is fixed under the reflection $\sigma_1\tau$. Note that $\sigma_i^{-1}\sigma_{2i+1}\tau\sigma_i=\sigma_1\tau$. Hence, for each   $N\in \mathcal{N}(C_{n+1})$, there is an annular non-crossing matching $\mathcal{M}\in\mathrm{Ann}(n+1,k)$ such that $N=\sigma_{i}\cdot N_{\mathcal{M}}$, where $N$ is fixed under the reflection $\sigma_{2i+1}\tau$.
    \end{proof}

    Given an annular non-crossing matching $\mathcal{M}$, let $b$ be the number of pairs of beads directly connected to each other in Steps (S1) and (S2), and let $a=k-2b$. Then we can see that
    \begin{align*}
        a=|\{I\in N_{\mathcal{M}}\mid \sigma_1\tau\cdot I=I\}|\text{ and }2b=|\{I\in N_{\mathcal{M}}\mid \sigma_1\tau\cdot I\neq I\}|.
    \end{align*}
    We denote by $\mathrm{Ann}_{n+1}(k,b)$ the set of annular non-crossing matchings in $\mathrm{Ann}_{n+1}(k)$ such that there are $b$ pairs of beads connecting to each other directly in Steps 1 and 2. Then $\mathrm{Ann}_{n+1}(k)=\cdotcup_{b=1}^k\mathrm{Ann}_{n+1}(k,b).$

    When $n+1$ is even, we also consider another kind of construction of annular non-crossing matchings.

    \bigskip

    \noindent{\textbf{Construction  of annular non-crossing matchings of type 2.}} We put $k$ beads on the arcs $\Gamma_1\cup\cdots\cup\Gamma_{\frac{n-1}{2}}$ satisfying rules (R1)$\sim$(R3), and we slightly change steps (S3)$\sim$(S5) as follows.
    \begin{enumerate}
        \item[(S3${}^\prime$)] If there is a blue bead which not lying right of a white bead, draw a line to the left arc of the intersection of the annulus and the straight line through the origin with angle $-\frac{\pi}{n+1}$.
        \item[(S4${}^\prime$)] If there is a white bead which not connected to a blue bead, draw a line to the right arc of the intersection of the annulus and the straight line through the origin with angle $-\frac{\pi}{n+1}$.
        \item[(S5${}^\prime$)] Reflect along the straight line through the origin with angle $-\frac{\pi}{n+1}$.
    \end{enumerate}

    \bigskip

    Then we obtain an annular non-crossing matching corresponding to a nested set $N\in \mathcal{N}_k(C_{n+1})$ which is fixed under the reflection $\tau$, see Figure~\ref{fig:ann_matching2}.

    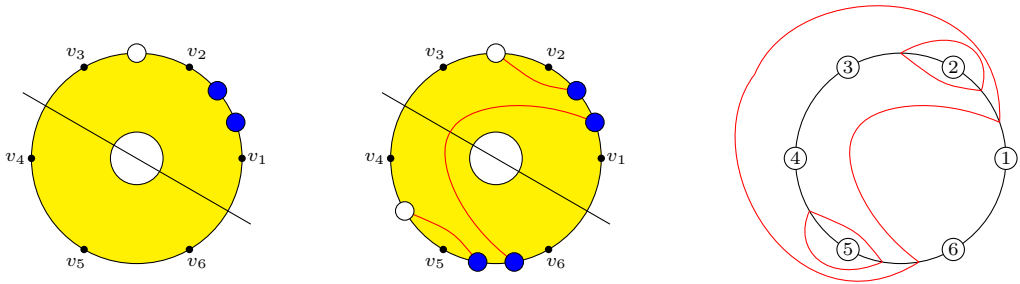
\begin{figure}[h]
    \centering
        \begin{subfigure}{.3\textwidth}
        \centering
            \begin{tikzpicture}[scale=.7]
                \filldraw[fill=yellow] (0,0) circle(2cm);
                \filldraw[fill=white] (0,0) circle(.5cm);
                \draw (-30:2.5cm)--(150:2.5);
                \fill (2,0) circle(2pt);
                \fill (360/6:2cm) circle(2pt);
                \fill (2*360/6:2cm) circle(2pt);
                \fill (3*360/6:2cm) circle(2pt);
                \fill (4*360/6:2cm) circle(2pt);
                \fill (5*360/6:2cm) circle(2pt);
                \draw (2.3,0) node{\tiny$v_1$};
                \draw (360/6:2.3cm) node{\tiny$v_2$};
                \draw (2*360/6:2.3cm) node{\tiny$v_3$};
                \draw (3*360/6:2.3cm) node{\tiny$v_4$};
                \draw (4*360/6:2.3cm) node{\tiny$v_5$};
                \draw (5*360/6:2.3cm) node{\tiny$v_6$};
                \filldraw[fill=white] (90:2cm) circle (5pt);
                \filldraw[fill=blue] (40:2cm) circle (5pt);
                \filldraw[fill=blue] (20:2cm) circle (5pt);
            \end{tikzpicture}
        \end{subfigure}
        \begin{subfigure}{.3\textwidth}
        \centering
            \begin{tikzpicture}[scale=.7]
                \filldraw[fill=yellow] (0,0) circle(2cm);
                \filldraw[fill=white] (0,0) circle(.5cm);
                \draw (-30:2.5cm)--(150:2.5);
                \fill (2,0) circle(2pt);
                \fill (360/6:2cm) circle(2pt);
                \fill (2*360/6:2cm) circle(2pt);
                \fill (3*360/6:2cm) circle(2pt);
                \fill (4*360/6:2cm) circle(2pt);
                \fill (5*360/6:2cm) circle(2pt);
                \draw (2.3,0) node{\tiny$v_1$};
                \draw (360/6:2.3cm) node{\tiny$v_2$};
                \draw (2*360/6:2.3cm) node{\tiny$v_3$};
                \draw (3*360/6:2.3cm) node{\tiny$v_4$};
                \draw (4*360/6:2.3cm) node{\tiny$v_5$};
                \draw (5*360/6:2.3cm) node{\tiny$v_6$};
                \draw[red] (40:2cm)..controls (60:1.6cm)..(90:2cm);
                \draw[red] (20:2cm)..controls (60:1.4cm) and (120:1.2)..(150:1);
                \draw[red] (260:2cm)..controls (240:1.6cm)..(210:2cm);
                \draw[red] (280:2cm)..controls (240:1.4cm) and (180:1.2)..(150:1);
                \filldraw[fill=white] (90:2cm) circle (5pt);
                \filldraw[fill=blue] (40:2cm) circle (5pt);
                \filldraw[fill=blue] (20:2cm) circle (5pt);
                \filldraw[fill=white] (210:2cm) circle (5pt);
                \filldraw[fill=blue] (260:2cm) circle (5pt);
                \filldraw[fill=blue] (280:2cm) circle (5pt);
            \end{tikzpicture}
        \end{subfigure}
        \begin{subfigure}{.3\textwidth}
        \centering
            \begin{tikzpicture}[scale=.7]
                \draw (0,0) circle(2cm);
                \foreach \x in {1,2,...,6}
                    {
                    \filldraw[fill=white] (\x*360/6:2cm) circle(6pt);
                    \draw (\x*60-60:2cm) node {\tiny$\x$};
                    }
                \draw (2,0) node{\tiny$1$};
                \draw[red] (40:2cm)..controls (60:1.6cm)..(90:2cm);
                \draw[red] (20:2cm)..controls (60:1.4cm) and (120:1.2)..(150:1);
                \draw[red] (260:2cm)..controls (240:1.6cm)..(210:2cm);
                \draw[red] (280:2cm)..controls (240:1.4cm) and (180:1.2)..(150:1);
                \draw[red] (40:2cm)..controls (50:2.8cm) and (70:2.7cm)..(90:2cm);
                \draw[red] (20:2cm)..controls (60:4cm) and (120:4cm)..(150:3.2cm);
                \draw[red] (260:2cm)..controls (240:2.8cm) and (220:2.7cm)..(210:2cm);
                \draw[red] (280:2cm)..controls (240:4cm) and (180:4cm)..(150:3.2cm);
            \end{tikzpicture}
        \end{subfigure}
        \caption{The annular non-crossing matching of type 2 corresponding to the nested set $\{\{2\},\{5\},\{2,3,4,5\}\}\in\mathcal{N}_3(C_6)$}\label{fig:ann_matching2}
    \end{figure}

    Let $\mathrm{Ann}'_{n+1}(k)$ be the set of all annular non-crossing matchings of type~2. Since both $\mathrm{Ann}_{n+1}(k)$ and $\mathrm{Ann}_{n}(k)$ are constructed from the same arrangements of beads, we can see that for odd $n$
    \begin{equation}\label{eq:relation_Ann_AnnPrime}
        |\mathrm{Ann}'_{n+1}(k)|=|\mathrm{Ann}_{n}(k)|.
    \end{equation}

    \begin{proposition}
        Let $n+1$ be even. For an integer $1\leq k\leq n$, the sum of beta numbers, $\sum_{d\mid k}\beta_{n+1}(d,k)$, is equal to the number
        \begin{equation*}
                \frac{n+1}{2}\left(|\mathrm{Ann}_{n+1}(k)|+|\mathrm{Ann}_{n}(k))|\right).
        \end{equation*}
    \end{proposition}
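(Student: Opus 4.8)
The plan is to adapt the argument for the odd case (Proposition~\ref{prop:odd_annular}) to the situation where $\Dih_{n+1}$ has \emph{two} conjugacy classes of reflections. When $n+1$ is even, the $n+1$ reflections split into $\frac{n+1}{2}$ \emph{vertex-type} reflections, which fix two antipodal vertices and are all conjugate to $\tau$, and $\frac{n+1}{2}$ \emph{edge-type} reflections, which fix no vertex and are all conjugate to $\sigma_1\tau$. Since $\sum_{d\mid k}\beta_{n+1}(d,k)$ records the nested sets $N\in\mathcal{N}_k(C_{n+1})$ with dihedral isotropy, equivalently those fixed by at least one reflection, I would count these by their incidences with reflections, treating the two classes separately and matching each class with one of the two constructions of annular non-crossing matchings.

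First I would establish the class-by-class dictionaries. As in Proposition~\ref{prop:odd_annular}, the type~1 construction, whose reflection is the $x$-axis through $v_{n+1,1}$ and its antipode, produces from the bead data a nested set fixed by this canonical vertex-type reflection $r$: the $a=k-2b$ beads sent to the axis in steps~(S3)--(S4) give the elements $I$ with $r\cdot I=I$, while the $b$ pairs matched in steps~(S1)--(S2) give the $2b$ elements permuted by $r$. I would check that this assignment is a bijection between $\mathrm{Ann}_{n+1}(k)$ and the set of such fixed nested sets. Symmetrically, the type~2 construction, whose reflection is the line of angle $-\pi/(n+1)$ (the axis of the edge-type reflection $\sigma_1\tau$), together with the identity $|\mathrm{Ann}'_{n+1}(k)|=|\mathrm{Ann}_n(k)|$ from~\eqref{eq:relation_Ann_AnnPrime}, identifies $\mathrm{Ann}_n(k)$ with the nested sets fixed by $\sigma_1\tau$.

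Next I would sweep out each class by rotation, just as the identity $\sigma_i^{-1}\sigma_{2i+1}\tau\,\sigma_i=\sigma_1\tau$ was used in the odd case. Here the relevant conjugations are $\sigma_i^{-1}(\sigma_{2i}\tau)\sigma_i=\tau$ and $\sigma_i^{-1}(\sigma_{2i+1}\tau)\sigma_i=\sigma_1\tau$; as $i$ runs over $\{0,\dots,n\}$ the even values $2i$ realise all $\frac{n+1}{2}$ vertex-type reflections and the odd values $2i+1$ realise all $\frac{n+1}{2}$ edge-type reflections, each reflection occurring for exactly two values of $i$ because $n+1$ is even. Consequently every nested set fixed by a vertex-type reflection $\sigma_{2i}\tau$ can be written as $\sigma_i\cdot N_{\mathcal{M}}$ for a type~1 matching $\mathcal{M}$, and every nested set fixed by an edge-type reflection $\sigma_{2i+1}\tau$ as $\sigma_i\cdot N_{\mathcal{M}}$ for a type~2 matching. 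Tallying these incidences over the $\frac{n+1}{2}$ reflections in each class contributes $\frac{n+1}{2}\,|\mathrm{Ann}_{n+1}(k)|$ from the vertex-type class and $\frac{n+1}{2}\,|\mathrm{Ann}_n(k)|$ from the edge-type class, which assembles into the desired right-hand side.

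The hardest part will be the bookkeeping that reconciles this incidence count with the left-hand side. First, the bead-to-nested-set correspondence of the second step must be verified to be a genuine bijection, so that $|\mathrm{Ann}_{n+1}(k)|$ and $|\mathrm{Ann}_n(k)|$ are precisely the numbers of nested sets fixed by a single reflection of each class. Second, one must control how the $d$ reflections fixing a nested set of isotropy type $\Dih_d$ distribute between the two classes; this distribution is governed by the parity of $\frac{n+1}{d}$, all $d$ fixing reflections lying in one class when $\frac{n+1}{d}$ is even and splitting evenly between the two classes when $\frac{n+1}{d}$ is odd. Confirming that these multiplicities combine correctly across the two classes, so that the two class contributions reproduce $\sum_{d\mid k}\beta_{n+1}(d,k)$, is the crux of the argument.
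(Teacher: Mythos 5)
Your proposal follows essentially the same route as the paper: split the reflections of $\Dih_{n+1}$ into the two conjugacy classes (vertex-type and edge-type), identify type-1 and type-2 annular non-crossing matchings with nested sets fixed by a representative of each class, sweep each class out by rotation to obtain the factor $\frac{n+1}{2}$, and convert $|\mathrm{Ann}'_{n+1}(k)|$ into $|\mathrm{Ann}_{n}(k)|$ via \eqref{eq:relation_Ann_AnnPrime}. The multiplicity bookkeeping you single out as the crux is not carried out in the paper either, which simply appeals to ``the similar argument'' of Proposition~\ref{prop:odd_annular} after noting that the two families of matchings are disjoint, so your sketch is at the same level of completeness as the published proof.
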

    \begin{proof}
        If an annular non-crossing matching $\mathcal{M}$ is in $\mathrm{Ann}(n+1,k)$ (respectively, $\mathrm{Ann}'(n+1,k)$), then we reflect the matching along the $y$-axis (respectively, the straight line through the origin with angle $\frac{(n-1)\pi}{2(n+1)}$), and then we change blues beads to white beads and white beads to blue beads. Let $\mathcal{M}'$ be the resulting annular non-crossing matching. Then the nested set $N_{\mathcal{M}'}$ can be obtained from $N_{\mathcal{M}}$ by the reflection $\sigma_{\frac{n+3}{2}}\tau$ (respectively, $\sigma_{\frac{n+1}{2}}\tau$).

        Since there is no nested set $N$ such that $\tau\cdot N=\sigma_1\tau\cdot N=N$, we get $\mathrm{Ann}(n+1,k)\cap \mathrm{Ann}'(n+1,k)=\emptyset$. From the similar argument to the proof of Proposition~\ref{prop:odd_annular}, we have
        \begin{equation*}
                \frac{n+1}{2}\left(|\mathrm{Ann}_{n+1}(k)|+|\mathrm{Ann}'_{n+1}(k)|\right).
        \end{equation*}
        Therefore, the proposition follows from~\eqref{eq:relation_Ann_AnnPrime}.
    \end{proof}

    Recall that $\beta_{n+1}(d,k)$ is the cardinality of the set
    $$\left\{N\in\mathcal{N}_k(C_{n+1})\mid (\Dih_{n+1})_N\cong\Dih_d\right\}.$$ Hence we can compute $\beta_{n+1}(d,k)$ by using the same argument in the proof of Lemma~\ref{lem:same_size_nested_set}, if we know $|\mathrm{Ann}_{n+1}(k)|$ for odd $n+1$.
    Furthermore, if we know $\mathrm{Ann}_{n+1}(k,a)$, then we can also count the nested sets $N\in\mathcal{N}_k(C_{n+1})$ such that $(\Dih_{n+1})\cong\Dih_d$ and the number of elements $I\in N$ fixed under some reflection $\tau'\in(\Dih_{n+1})_N$ is equal to $a$.
    We can explicitly compute $|\mathrm{Ann}_{n+1}(k,a)|$ when $k$ or $k-a$ is small as follows.

\renewcommand{\arraystretch}{1.15}
\begin{table}[h]
\centering\footnotesize{
\begin{tabular}{c||c | c | c | c }
    \toprule
    \backslashbox{$k$}{$a$} & $k$ & $k-2$ & $k-4$ & $\cdots$\\
    \midrule
    $1$ & $2{\lfloor \frac{n+1}{2}\rfloor\choose 1}$ &&&\\
    $2$ & $3{\lfloor \frac{n+1}{2}\rfloor\choose 2}$ & ${\lfloor \frac{n+1}{2}\rfloor\choose 2}$ &&\\
    $3$ & $4{\lfloor \frac{n+1}{2}\rfloor\choose 3}$ & $4{\lfloor \frac{n+1}{2}\rfloor\choose 3}+2{\lfloor \frac{n+1}{2}\rfloor\choose 2}$ &&\\
    $4$ & $5{\lfloor \frac{n+1}{2}\rfloor\choose 4}$ & $9{\lfloor \frac{n+1}{2}\rfloor\choose 3}+6{\lfloor \frac{n+1}{2}\rfloor\choose 3}$ & $2{\lfloor \frac{n+1}{2}\rfloor\choose 4}+2{\lfloor \frac{n+1}{2}\rfloor\choose 3}$ &\\
    $5$ & $6{\lfloor \frac{n+1}{2}\rfloor\choose 5}$ & $16{\lfloor \frac{n+1}{2}\rfloor\choose 5}+ 12{\lfloor \frac{n+1}{2}\rfloor\choose 4}$ & $10{\lfloor \frac{n+1}{2}\rfloor\choose 5} + 14{\lfloor \frac{n+1}{2}\rfloor\choose 4} + 3{\lfloor \frac{n+1}{2}\rfloor\choose3}$ &\\
    $\vdots$ & $\vdots$ & $\vdots$ & $\vdots$ & \\
    $k$ & $(k+1){\lfloor \frac{n+1}{2}\rfloor\choose k}$& $(k-1)^2{\lfloor \frac{n+1}{2}\rfloor\choose k}+(k-1)(k-2){\lfloor \frac{n+1}{2}\rfloor\choose k-1}$ &  & \\
    \bottomrule
    \end{tabular}}
    \caption{$\mathrm{Ann}_{n+1}(k,a)$}\label{table:all_type}
\end{table}

Note that the coefficients of ${\lfloor \frac{n+1}{2}\rfloor\choose \ell}$ on Table~\ref{table:all_type} for $1\leq\ell\leq k$ are coming from the number of arrangements of two letters $B$ and $W$ satisfying certain conditions, for example, $k+1=\sum_{i=0}^{k}1$ is the number of arrangements satisfying that there is no $W$ lying on the left side of $B$, $(k-1)^2=\sum_{i=0}^{k-2}\left\{\frac{(k-1-i)!}{1!(k-2-i)!}+i\right\}$ is the number of arrangements satisfying that there is only one $W$ lying on the left side of some $B$ or there is only one $B$ lying on the right side of some $W$, and $(k-1)(k-2)=\sum_{i=0}^{k-3}\left\{\frac{(i+1)!}{1!i!}+\frac{(k-2-i)!}{1!(k-3-i)!}\right\}$ is the number of arrangements satisfying that it contains the word $WBB$ or $WWB$ and also satisfying that there is only one $W$ lying on the left side of some $B$ or there is only one $B$ lying on the right side of $W$.

\begin{example}
    For the cycle graph $C_4$, the representation of $\Dih_4$ on $H_T^\ast(M_{C_{n+1}};\C)$ is
    \begin{equation*}
        \begin{split}
            &1 + (\mathrm{Ind}_{\langle e\rangle}^{\Dih_{4}}1)\left[\frac{3t^2}{(1-t)^2} + \frac{2t^3}{(1-t)^3} + \frac{1}{2}\left\{\frac{t^2}{(1-t)^2}-\frac{t^2}{1-t^2}\right\}\right]\\
            &+ (\mathrm{Ind}_{\Dih_1}^{\Dih_{4}}1)\left[\frac{3t}{1-t} + \frac{t^2}{(1-t)^2} +\frac{t}{1-t}\frac{t^2}{1-t^2} +\frac{1}{2} \left\{\frac{t^3}{(1-t)^3}-\frac{t}{1-t}\frac{t^2}{1-t^2}\right\}\right]\\
            &+(\mathrm{Ind}_{\Dih_2}^{\Dih_{4}}1)\left(\frac{t^2}{1-t^2}\right).
        \end{split}
    \end{equation*}
    By substituting $\frac{|G|}{|H|}$ instead of $(\mathrm{Ind}_{H}^{G}1)$ in the above, we get the Poincar\'{e} series of $H_T^\ast(M_{C_4};\C)$ $$1+\frac{12t}{1-t}+\frac{30t^2}{(1-t)^2}+\frac{20t^3}{(1-t)^3}.$$
\end{example}

\section*{Acknowledgement}
The author thanks Professor Mikiya Masuda for his valuable comments and encouragement. The author also thanks Miho Hatanaka, Tatsuya Horiguchi, and Jang Soo Kim.

\end{document}